\documentclass[12pt]{article}

\usepackage{authblk}

\usepackage{algpseudocode}
\usepackage{algorithm}
\usepackage{subcaption}

\usepackage{amsfonts}
\usepackage{graphicx}
\usepackage{tabularx}
\usepackage{array}
\usepackage[usenames,dvipsnames]{color}
\usepackage{amsmath}
\usepackage{amsthm}
\usepackage{amssymb}
\usepackage{fullpage}
\usepackage[dvipsnames]{xcolor}
\usepackage{tikz}
\usepackage{listings}
\usepackage{dsfont}
\usepackage{enumerate}
\usepackage{romannum}

\newtheorem{theorem}{Theorem}[section]
\newtheorem{proposition}[theorem]{Proposition}

\newtheorem{conjecture}[theorem]{Conjecture}
\newtheorem{problem}[theorem]{Problem}

\newtheorem{lemma}[theorem]{Lemma}
\newtheorem{corollary}[theorem]{Corollary}
\theoremstyle{definition}

\def\epsilon{\varepsilon}
\DeclareMathOperator{\dist}{dist}

\title{Greedily Constructing Small Quasi-Kernels}
\author[1]{Alexander Clow\,\thanks{Supported by the Natural Sciences and Engineering Research Council of Canada (NSERC) through PGS D-601066-2025}}

\affil[1]{ \small{Department of Mathematics, Simon Fraser University}}
\date{}

\begin{document}
\pagenumbering{arabic}
\maketitle

\begin{abstract}

    In a digraph $D$,
    a quasi-kernel is an independent set $Q$ such that for every vertex $u$, there is a vertex $v \in Q$ satisfying $\dist(v,u)\leq 2$.
    In 1974 Chv\'atal and Lov\'asz showed every digraph contains a quasi-kernel.
    In 1976, P. L. Erd\H{o}s and Sz\'ekely conjectured that every sourceless digraph has a quasi-kernel of order at most $\frac{n}{2}$.
    Despite significant recent attention by the community the problem remains far from solved, with no bound of the form $(1-\epsilon)n$ known.
    We introduce a polynomial time algorithm which greedily constructs a small quasi-kernel.
    Using this algorithm we show that if $D$ is a $\Vec{K}_{1,d}$-free digraph, then $D$ has a 
    quasi-kernel of order at most $\frac{(d^2 - 2d + 2)n}{d^2-d+1}$.
    By refining this argument we prove that for any $D$ with maximum out-degree $3$ this algorithm constructs a 
    quasi-kernel of order at most ${4n}/{7}$.
    Finally,
    we consider the problem in digraphs forbidding certain orientation of short cycles
    as subgraphs,
    concluding that all orientations $D$ of a graph $G$ with girth at least $7$
    have a quasi-kernel of order at most $\frac{(d^2+4)n}{(d+2)^2}$, where
    $d$ is the maximum out-degree of $D$.
\end{abstract}

\section{Introduction}

\subsection{Background}

We consider directed graphs (digraphs), which may have multiedges. 
Loops and multiple edges of the form $(u,v)$ are not considered, as they have no effect on the questions we study.
Notice that this allows for edges $(u,v)$ and $(v,u)$ to both be present.
An oriented graph is a digraph such that for all pairs $u,v$
at most one of the edges $(u,v)$ or $(v,u)$ is present.
Given $u$ and $v$ in a digraph $D$ we let $\dist(v,u)$ denote the length of a shortest directed path from $v$ to $u$.
If no such path exists, then we say $\dist(v,u) = \infty$.
A source is a vertex with in-degree zero and a sink is a vertex with out-degree zero.
All other digraph notation we use is standard or defined explicitly.

Let $D$ be a digraph,
a \emph{kernel} is an independent set $K$ such that every vertex is in $K$ or has an in-neighbour in $K$.
That is, $K$ is both an independent set and a directed dominating set.
Not every digraph contains a kernel, see any odd directed cycle.
In some sense this is a minimal obstruction for the existence of a kernel, 
since Richardson \cite{richardson1953solutions} 
proved that every digraph without a directed odd cycle contains a kernel.
This is a departure from undirected graphs, where every maximal independent set is necessarily a dominating set.
Deciding if a digraph has a kernel was shown to be NP-hard by Chv{\'a}tal \cite{chvatal1973computational}.

An object related to kernels is quasi-kernels.
A \emph{quasi-kernel} is an independent set $Q$ such that for every vertex $u$ in $D$ there exists a $v\in Q$ such that $\dist(v,u) \leq 2$.
That is $Q$ is both an independent set and a directed distance-$2$-dominating set.
Unlike with kernels, Chv\'atal and Lov\'asz \cite{chvatal1974every} showed that every digraph contains a quasi-kernel.
The proof here is algorithmic:
Let $D$ be a smallest digraph without a quasi-kernel. 
Let $v$ be a vertex in $D$, and let $H = D - N^{+}[v]$.
By induction there exists a quasi-kernel $K$ in $H$.
If $K\cap N^-(v)\neq \emptyset$, then $K$ is a quasi-kernel in $D$.
Otherwise $K\cup \{v\}$ is a quasi-kernel in $D$.

Hence, by repeatedly applying this procedure one can construct a quasi-kernel efficiently.
Furthermore, this proves that for every vertex $v$, there is a quasi-kernel $Q$ such that $Q \cap N^{+}(v)= \emptyset$.
Interestingly, Croitoru \cite{croitoru2015note} proved that deciding if there exists a quasi-kernel $Q$ such that $v \in Q$
is NP-complete.
In a similar vein, 
it was shown by Langlois, Meunier, Rizzi, and Vialette \cite{langlois2022algorithmic}
that deciding if a digraph has two disjoint quasi-kernels is NP-complete, even for digraphs with maximum out-degree at most $6$.
Moreover, it was shown in \cite{langlois2022algorithmic} 
that the decision problem related to finding a smallest quasi-kernel
is NP-complete
even for acyclic orientations of cubic
graphs.

An obvious obstruction for a digraph $D$ to have a small quasi-kernel is if $D$ contains a large number of sources.
This observation led to Erd\H{o}s and Sz\'ekely conjecturing in 1976 that every sourceless digraphs admits a small quasi-kernel.
The conjecture was initially not published and was communicated by
word of mouth until it finally appeared in 2010 \cite{erdos2010two}.

\begin{conjecture}[The Small Quasi-Kernel Conjecture]\label{Conj: small quasi}
 If $D$ is a sourceless $n$ vertex digraph, then
$D$ contains a quasi-kernel with order at most $\frac{n}{2}$.
\end{conjecture}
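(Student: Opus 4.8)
Conjecture~\ref{Conj: small quasi} is open; what follows is therefore the line of attack I would take to prove it by pushing the greedy Chv\'atal--Lov\'asz construction recalled above to the optimal ratio, not a finished proof. The organising device is to replace the numerical target by a matching target: it is enough to produce a quasi-kernel $Q$ together with an injection $\rho\colon Q\setminus\mathrm{Iso}(D)\to V(D)\setminus Q$ with $\rho(v)\in N^+(v)$ for every $v\in Q\setminus\mathrm{Iso}(D)$, where $\mathrm{Iso}(D)$ denotes the set of isolated vertices. Indeed such a $\rho$ embeds $Q\setminus\mathrm{Iso}(D)$ into $V(D)\setminus Q$, so $2|Q|\le n+|\mathrm{Iso}(D)|$, and when $D$ is sourceless $\mathrm{Iso}(D)=\emptyset$ and we get $|Q|\le n/2$. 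So I would try to prove, for \emph{every} digraph $D$, the existence of a quasi-kernel carrying such an out-matching; the reformulation is what makes the statement robust enough to induct on, since the Chv\'atal--Lov\'asz recursion does not preserve sourcelessness but does stay inside the class of all digraphs.

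The plan is to run the Chv\'atal--Lov\'asz recursion while carrying the matching. If $D$ has no edge, take $Q=V(D)$ (every vertex of $Q$ is isolated, so the matching is vacuous). Otherwise pick a vertex $v$ with $d^+(v)\ge 1$, set $H=D-N^+[v]$, recurse to obtain a quasi-kernel $K$ of $H$ with an out-matching $\rho_K$ on $K\setminus\mathrm{Iso}(H)$, and put $Q=K$ or $Q=K\cup\{v\}$ exactly as in the Chv\'atal--Lov\'asz dichotomy (adding $v$ unless $K$ already has an in-neighbour of $v$). In both cases $Q$ is a quasi-kernel of $D$ by the classical argument. For the matching there is essentially one thing to check. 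A vertex that is isolated in $H$ but not in $D$ lies in $K$ (an isolated vertex belongs to every quasi-kernel), is therefore in $Q\setminus\mathrm{Iso}(D)$, and has all of its out-arcs trapped inside $N^+[v]$; so it must be out-matched into $N^+[v]$, and if we are in the branch $Q=K\cup\{v\}$ then $v$ too needs an out-arc into $N^+[v]$. If $H$ has no such ``new'' isolated vertex, then $\mathrm{Iso}(H)=\mathrm{Iso}(D)\cap V(H)$, so $\rho_K$ already covers all of $K\setminus\mathrm{Iso}(D)$, and any out-arc of $v$ extends it to $Q$ (its head lies in $N^+(v)\subseteq N^+[v]$, which is disjoint from $V(H)\supseteq K$ and hence from the range of $\rho_K$); the induction closes.

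Hence everything comes down to the choice of $v$: one must find a vertex $v$ with $d^+(v)\ge1$ such that the vertices made newly isolated by deleting $N^+[v]$ can all be injectively assigned out-neighbours in $N^+[v]$ — a Hall condition between that set, together with $\{v\}$ when $v$ will enter $Q$, and $N^+[v]$. This is the main obstacle and the reason no bound of the form $(1-\epsilon)n$ is yet known. A newly isolated vertex $u$ forces a short configuration $v\to w\to u\to\cdots$ with $w\in N^+(v)$ and $N^+(u)\subseteq N^+[v]$, and the danger is that a digraph can present such configurations around every vertex with the newly isolated vertices all pointing back into a tiny part of $N^+[v]$, so that no $v$ satisfies the Hall condition; a naive averaging over $v$ does not rule this out. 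The settings treated in this paper are exactly the ones that tame it: a bound on the out-degree, or girth at least $7$, limits how many vertices a single deletion can strand and how they can point back, forcing them to be matchable for a well-chosen $v$, so the induction closes. To handle the general case one would need either a structural ``good vertex'' lemma of this kind with no such hypothesis, or to abandon the step-by-step matching for a global argument — e.g.\ maintaining a running partial out-matching alongside a reservoir of credit vertices shown never to run dry, or proving the $n/2$ bound by LP duality for the natural ``independent distance-$2$ out-domination'' relaxation while using sourcelessness to certify the dual. This last step is, in effect, the full strength of the conjecture.
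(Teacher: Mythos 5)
This statement is Conjecture~\ref{Conj: small quasi}, which is open: the paper does not prove it (it only proves special cases via Theorem~\ref{Thm: Main CounterStructure}, Corollary~\ref{Coro: Induced Stars}, Theorem~\ref{Thm: Out-degree 3}, and Theorem~\ref{Thm: C_3,C_4,C_6}), so there is no paper proof to compare yours against, and you are right to present only a strategy. As a strategy it is in fact very close in spirit to the paper's Algorithm~\ref{alg:main} and Theorem~\ref{Thm: Main CounterStructure}: your Hall-type condition on a chosen vertex $v$ is a matching version of the paper's local counting condition $t|N^+(v)\cup N^+(S(v))|\ge |S(v)|+1$ with $t=1$, and your recursion is the same ``delete $N^+[v]$, absorb the stranded vertices, recurse'' scheme.

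There are, however, two concrete problems beyond the admitted incompleteness. First, your strengthened induction target --- \emph{every} digraph has a quasi-kernel $Q$ with an injection $\rho\colon Q\setminus\mathrm{Iso}(D)\to V(D)\setminus Q$, $\rho(v)\in N^+(v)$ --- is false as stated: take $k\ge 2$ vertices $x_1,\dots,x_k$ each with a single arc $x_i\to c$. Every $x_i$ is a source, hence lies in every quasi-kernel, none is isolated, and all have out-neighbourhood $\{c\}$, so no injective $\rho$ exists. At minimum you must also exempt sources from the matching requirement (which still yields $n/2$ in the sourceless case), and that weakened statement remains unproven. Second, and more seriously, the ``good vertex'' step on which your recursion rests is not merely hard but provably impossible as a purely local condition: Proposition~\ref{Prop: Alg not perfect} (the Paley tournament with $k$ pendant sinks per vertex, Figure~\ref{fig:payley sink}) gives sourceless digraphs in which every vertex $v$ of positive out-degree strands $nk$ newly isolated vertices that are sinks of $D$, hence have no out-arcs at all; for such $v$ your Hall condition between the newly isolated vertices and $N^+[v]$ fails outright, and it fails for \emph{every} admissible choice of $v$. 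So the step-by-step matching recursion cannot close on all sourceless digraphs, and one is forced into exactly the kind of global or amortized argument you gesture at in your last sentences --- which is, as you say, the full strength of the conjecture.
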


If correct then this conjecture is best possible for infinitely many digraphs.
To see this observe that the bound is tight when $D$ is a disjoint union of directed $4$-cycles.
Moreover, the bound is tight asymptotically for arbitrarily large strongly connected digraphs, 
see Example~17 in \cite{erdHos2025small}.

Conjecture~\ref{Conj: small quasi} has been proven for some classes of digraphs.
It is a corollary of Chv\'atal and Lov\'asz's work that any digraph with independence number at most $\frac{n}{2}$ contains a small quasi-kernel.
So
tournaments contain small quasi-kernels.
Note this was independently proven much earlier by 
Landau \cite{landau1953dominance}.
Nguyen, Scott, and Seymour \cite{nguyen2024distant}
proved that Conjecture~\ref{Conj: small quasi} holds for split digraphs.
This extends two earlier partial results for split digraphs, see \cite{ai2023results,langlois2025quasi}.
Kostochka, Luo, and Shan \cite{kostochka2022towards}
proved that Conjecture~\ref{Conj: small quasi} holds
for orientations of $4$-colourable graphs.
Also, Ai, Gerke,Gutin, Yeo, and Zhou \cite{ai2023results} proved that Conjecture~\ref{Conj: small quasi} holds 
when forbidding an orientation of the claw as an induced subgraph.
Earlier work by Heard and Huang \cite{heard2008disjoint} proved Conjecture~\ref{Conj: small quasi}
for semicomplete multipartite, quasi-transitive, and locally semicomplete
digraphs.

The best upper bound on the size of a smallest quasi-kernel in a sourceless digraph
comes from Spiro \cite{spiro2024generalized}, who showed every sourceless digraph has a quasi-kernel
of order at most $n - \frac{1}{4}\sqrt{n\log(n)}$.
The proof of this bound is existential, and it does not provide an algorithm which is guaranteed
to output a quasi-kernel of size at most $n - \frac{1}{4}\sqrt{n\log(n)}$.

\subsection{Our results}

Our primary contribution
is to define a new polynomial time algorithm, see Algorithm~\ref{alg:main}, for constructing a quasi-kernel in a sourceless digraph.
In spirit this is a greedy 
algorithm operating on the same principles as
Chv\'atal and Lov\'asz's proof for the existence of quasi-kernels.
However, this novel approach provides some distinct advantages 
for constructing, as well as 
for proving the existence of, small quasi-kernels
compared to the naive existence algorithm.
In particular,
if the structure of the input digraph is well understood, 
then we can guarantee the quasi-kernel outputted by this algorithm has bounded size.

The rest of the paper is spent 
analyzing how this algorithm interacts with various digraph structures.
To more easily describe the relevant structure we introduce the following notation.
Let $D$ be a digraph and $X$ a vertex subset in $D$,
then $S(X)$ denotes the set of sources in $D - N^{+}[X]$.
When $X = \{v\}$, we write $S(v)$ rather than $S(\{v\})$.
A set of digraphs $\mathcal{H}$ is a hereditary class if $\mathcal{H}$ is closed on taking induced subgraphs.

\begin{theorem}\label{Thm: Main CounterStructure}
    Let $t\geq 1$ be a constant, and let 
    $\mathcal{H}$ be a hereditary class of digraphs such that for all sourceless digraphs $D \in \mathcal{H}$
    there exists a vertex $v \in V(D)$ where $v \notin N^+(S(v))$ and
    \[
    t|N^+(v) \cup N^+(S(v))| \geq |S(v)|+1,
    \]
    or  $v \in N^+(S(v))$ and
    \[
    t|N^+(v) \cup N^+(S(v))| \geq |S(v)|.
    \]
    Then, there is a $O(n^4)$ time algorithm
    which takes any sourceless digraph $D \in \mathcal{H}$ of order $n$
    and returns a quasi-kernel $Q$ in $D$ such that
    $|Q| \leq \frac{tn}{t+1}$.
\end{theorem}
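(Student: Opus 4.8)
The plan is to design Algorithm~\ref{alg:main} as an iterative procedure that repeatedly exploits the structural hypothesis to ``pay for'' vertices added to the quasi-kernel. At each stage I maintain a partial independent set $Q$ together with a set of already-``processed'' vertices, and I work in the sub-digraph $D'$ induced on the unprocessed vertices. The key observation, in the spirit of Chv\'atal--Lov\'asz, is that if $v$ is added to $Q$ and we delete $N^+[v]$, then the vertices of $S(v)$ (the sources created in $D - N^+[v]$) are exactly the ones that might become ``orphaned'' and need later attention; but each such source, being a source, must itself be picked up into $Q$ (or dominated within distance $2$). So the real cost of handling $v$ is not $1$ but roughly $1 + |S(v)|$ — and the structural hypothesis guarantees a choice of $v$ for which this cost is offset by the number of vertices we simultaneously remove from further consideration, namely those in $N^+(v) \cup N^+(S(v))$.

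First I would make the accounting precise. Pick $v$ as guaranteed by the hypothesis. Distinguish the two cases. If $v \notin N^+(S(v))$, I add $v$ together with (a maximal independent subset of) $S(v)$ to $Q$; this is legitimate because $S(v)$ is independent (sources have no in-neighbours among themselves after the deletion) and $v$ has no in-neighbour in $S(v)$ and no out-neighbour there either by the created-source property, so $\{v\} \cup S(v)$ is independent — actually one must be slightly careful and take a maximal independent subset $S' \subseteq S(v)$, but every vertex of $S(v)$ is then within distance $\le 1$ of $S'$, and crucially every remaining vertex of $D'$ that was dominated (within distance $2$) by something in $N^+[v] \cup N^+[S(v)]$ stays dominated. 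The number of vertices ``consumed'' in this round — placed in $Q$ or removed as dominated-and-finished — includes the $|S(v)|+1$ vertices we either added or accounted for, plus the $|N^+(v) \cup N^+(S(v))|$ out-neighbours we delete. The inequality $t\,|N^+(v)\cup N^+(S(v))| \ge |S(v)|+1$ is exactly what is needed so that, writing $a$ for the number of $Q$-vertices added and $b$ for the total vertices removed in this round, we get $a \le \frac{t}{t+1}\,b$, i.e. the ratio $|Q|/n$ never exceeds $\frac{t}{t+1}$. The case $v \in N^+(S(v))$ is the same but one does not add $v$ itself (it is already dominated by a source), so $a$ drops by one and the weaker inequality $t\,|N^+(v)\cup N^+(S(v))| \ge |S(v)|$ suffices.

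Next I would handle termination and correctness. Because $\mathcal{H}$ is hereditary, every sub-digraph $D'$ we recurse into is again in $\mathcal{H}$; and if $D'$ still has a source, that source has in-degree zero \emph{in $D'$} but — and here is a subtlety worth flagging — it had in-degree positive in the original $D$, so it was an out-neighbour of something already deleted, hence already within distance $1$ of a deleted vertex and thus within distance $2$ of $Q$ provided we are careful that everything we delete is itself dominated by $Q$. This invariant — every deleted vertex lies within distance $2$ of $Q$ — is the load-bearing claim, and I would prove it by induction on rounds: vertices in $N^+[v]$ are within distance $1$ of $v \in Q$ (or within distance $2$ via the source that dominates $v$); vertices in $N^+[S']$ likewise; and the maximality of $S'$ inside $S(v)$ ensures the remaining old sources of $D'$ are within distance $1$ of $S' \subseteq Q$. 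When the process halts, $D'$ is empty, so $Q$ is a distance-$2$-dominating independent set of $D$, i.e. a quasi-kernel, and the running-total inequality gives $|Q| \le \frac{tn}{t+1}$.

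Finally, the complexity: each round can be implemented by computing $N^+[v]$, forming $D - N^+[v]$, reading off its sources $S(v)$, and checking the inequality for each candidate $v$ — all in $O(n^2)$ time per candidate, $O(n^3)$ to find a valid $v$, and there are $O(n)$ rounds, giving $O(n^4)$. \textbf{The main obstacle} I anticipate is pinning down the invariant in the $v \in N^+(S(v))$ branch: there, $v$ is \emph{not} added to $Q$, so one must verify that $v$ and all of $N^+(v)$ are genuinely within distance $2$ of $Q$ through the source in $S(v)$ that reaches $v$, and that deleting $N^+(v) \cup N^+(S(v))$ does not strand any vertex — equivalently, that choosing a maximal independent $S' \subseteq S(v)$ still dominates $v$. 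Getting the definition of ``consumed vertices'' in each round to line up exactly with the quantity $|N^+(v) \cup N^+(S(v))| + |S(v)| + [\,v\notin N^+(S(v))\,]$, with no double-counting across rounds, is the delicate bookkeeping that makes the ratio bound come out cleanly.
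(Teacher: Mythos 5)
Your overall architecture matches the paper's (pick the guaranteed vertex $v$, put $S(v)$ into the solution, delete $N^+[v]\cup N^+[S(v)]$, recurse on what is left, and charge the cost against $t|N^+(v)\cup N^+(S(v))|\ge |S(v)|+1$), but there is a genuine gap at the recursion step. The digraph $D'=D-(N^+[v]\cup N^+[S(v)])$ need not be sourceless, and the theorem's hypothesis --- hence the inductive bound $|R|\le \frac{t|V(D')|}{t+1}$ --- applies only to \emph{sourceless} members of $\mathcal{H}$. Your proposed fix, that any source of $D'$ is already within distance $2$ of $Q$, is true at the first level (indeed any such source must have an in-neighbour in $S(v)\cup N^+(S(v))$, since if all of its in-neighbours lay in $N^+[v]$ it would itself belong to $S(v)$ and hence be deleted), but it does not propagate: if you discard these dominated sources for free, their removal creates further sources at the next level that are reachable from $Q$ only through the discarded vertices, i.e.\ possibly at distance $3$ or more; if instead you keep them in $D'$, you cannot invoke the hypothesis. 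The paper fills exactly this hole with the while loop of Algorithm~\ref{alg:main}: isolated vertices are discarded (and one proves, using that $D$ is sourceless, that their in-neighbours already lie within distance $1$ of $K$), while every surviving new source $x$ with $\deg^+_H(x)\ge 1$ is \emph{added to $K$} and $N^+[x]$, at least two vertices, is deleted. The assumption $t\ge 1$ is precisely what makes this one-for-two trade compatible with the target ratio $\frac{t}{t+1}$, and your accounting, which charges only the first round, has no term paying for these cascading additions; the paper's bound $|K|\le \frac{t}{t+1}|N^+[v]\cup N^+[S(v)]|+i$ versus $|V(H)|\le n-|N^+[v]\cup N^+[S(v)]|-2i$ is the missing bookkeeping. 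Only after this loop terminates is the remaining digraph sourceless, in $\mathcal{H}$, and strictly smaller, so the recursion (or minimal-counterexample argument) applies.

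A second, smaller but real, error: in the branch $v\notin N^+(S(v))$ you add $v$ to $Q$ immediately. That condition only excludes edges from $S(v)$ to $v$; an in-neighbour of $v$ can survive into $D'$ (nothing you delete touches $N^-(v)$) and later be selected into the recursive quasi-kernel, destroying independence of $Q$. The paper avoids this by deferring the decision: $v$ is appended only at the very end, and only if $v\notin N^+(K\cup R)$, with the ``$+1$'' in the hypothesis reserved to pay for this possible late addition. So the plan is the right greedy scheme in spirit, but the source-cascade mechanism and the deferred treatment of $v$ are pieces the proof cannot do without.
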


We say a digraph $D$ is $H$-free if $H$ is not an induced subgraph of $D$.
Let $\Vec{K}_{1,d}$ be the orientation of $K_{1,d}$ with all edges pointing away from the high degree vertex,
see Figure~\ref{fig:small digraphs}.
Notice that Theorem~\ref{Thm: Main CounterStructure}
implies that
if $D$ is a $\Vec{K}_{1,d}$-free digraph,
then $D$ contains a quasi-kernel of bounded size.

\begin{corollary}\label{Coro: Induced Stars}
    Let $d\geq 3$ be an integer.
    If $D$ is a sourceless $\Vec{K}_{1,d}$-free digraph,
    then $D$ contains a quasi-kernel of order at most $\frac{(d^2 - 2d + 2)n}{d^2-d+1}$.
\end{corollary}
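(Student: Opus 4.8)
The plan is to derive the corollary directly from Theorem~\ref{Thm: Main CounterStructure} by verifying its hypothesis for the hereditary class $\mathcal{H}$ of $\Vec{K}_{1,d}$-free digraphs with the constant $t = \frac{d^2-2d+2}{d-1} = d-1 + \frac{1}{d-1}$; note $\frac{t}{t+1} = \frac{d^2-2d+2}{d^2-d+1}$, which matches the claimed bound. Since being $\Vec{K}_{1,d}$-free is preserved under taking induced subgraphs, $\mathcal{H}$ is hereditary, so it suffices to exhibit, in any sourceless $D \in \mathcal{H}$, a vertex $v$ satisfying one of the two degree inequalities.

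The key observation is that $\Vec{K}_{1,d}$-freeness bounds out-degrees relative to out-neighbourhood structure: if a vertex $w$ has $d$ out-neighbours that form an independent set, then $w$ together with those out-neighbours is an induced $\Vec{K}_{1,d}$, a contradiction. I would pick $v$ to be a vertex maximizing $|S(v)|$ (or, if that is awkward, any vertex with $S(v)$ as large as possible among a suitable family; since $D$ is sourceless, one should first argue $S(v)$ behaves well — e.g. choosing $v$ to be a sink, or more carefully a vertex whose removal of $N^+[v]$ leaves many sources). The set $S(v)$ consists of sources of $D - N^+[v]$, so every vertex of $S(v)$ has all of its in-neighbours inside $N^+[v]$; in particular $S(v)$ is "fed" entirely by $N^+[v] = \{v\}\cup N^+(v)$. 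Now the set $N^+(v) \cup N^+(S(v))$ is precisely the set of vertices that can be reached in one step from $\{v\}\cup S(v)$ but which I want to control in size against $|S(v)|$. The heart of the argument is a counting/double-counting step: because $D$ is $\Vec{K}_{1,d}$-free, each vertex among $\{v\} \cup S(v)$ has at most $d-1$ out-neighbours outside any independent set it already dominates, while each vertex in $S(v)$ must receive an edge from $N^+[v]$; playing these two constraints against each other (the in-neighbours of $S(v)$ lie in a set of size $|N^+(v)|+1$, each of whose members has out-degree limited by the independence structure forced by $\Vec{K}_{1,d}$-freeness) yields $|S(v)| \le (d-1)\,|N^+(v)\cup N^+(S(v))| + O(1)$, which is exactly the inequality $t|N^+(v)\cup N^+(S(v))| \ge |S(v)| + 1$ (respectively $\ge |S(v)|$ in the case $v \in N^+(S(v))$) for the chosen $t$. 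Finally I would invoke Theorem~\ref{Thm: Main CounterStructure} to conclude the quasi-kernel bound $\frac{tn}{t+1} = \frac{(d^2-2d+2)n}{d^2-d+1}$, and check the $O(n^4)$ running time is inherited.

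The main obstacle I anticipate is making the counting argument tight enough to get the constant $d-1+\frac{1}{d-1}$ rather than a cruder bound like $d-1$ or $d$: one has to be careful about whether $v$ itself lies in $N^+(S(v))$ (the two cases in the theorem), about vertices of $S(v)$ that may also be out-neighbours of $v$, and about the possibility that $N^+(v)$ and $N^+(S(v))$ overlap or that some source in $S(v)$ has only $v$ as an in-neighbour. The "$+1$" improvements in both the hypothesis and the target constant suggest the argument must extract one extra unit from the vertex $v$ playing a special role (e.g. $v$ contributing a source-free slot, or $N^+[v]$ rather than $N^+(v)$ appearing in the in-neighbour bound), so the delicate part is bookkeeping exactly where that unit comes from; I would handle it by splitting into the two cases of Theorem~\ref{Thm: Main CounterStructure} and treating $v$'s own out-edges separately from those of $S(v)$.
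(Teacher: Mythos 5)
You have the paper's overall route—apply Theorem~\ref{Thm: Main CounterStructure} to the hereditary class of $\Vec{K}_{1,d}$-free digraphs with $t=\frac{d^2-2d+2}{d-1}$, using independence of $S(v)$ together with $\Vec{K}_{1,d}$-freeness to control $|S(v)|$—but the two steps you leave open are precisely the proof, and the concrete choices you float in their place do not work. Choosing $v$ to be a sink fails outright: then $N^+(v)=\emptyset$, and since $D$ is sourceless and no vertex has the sink $v$ as an in-neighbour, also $S(v)=\emptyset$, so the required inequality $t|N^+(v)\cup N^+(S(v))|\ge|S(v)|+1$ becomes $0\ge 1$. Choosing $v$ to maximize $|S(v)|$ is not known to help either: the clean bound that $\Vec{K}_{1,d}$-freeness gives is $|S(v)|\le(d-1)\deg^+(v)$ (every vertex of $S(v)$ has all its in-neighbours in $N^+(v)$, and each $w\in N^+(v)$ has at most $d-1$ out-neighbours inside the independent set $S(v)$), and then $t|N^+(v)\cup N^+(S(v))|\ge t\deg^+(v)=(d-1)\deg^+(v)+\frac{\deg^+(v)}{d-1}\ge|S(v)|+\frac{\deg^+(v)}{d-1}$; the ``$+1$'' is absorbed exactly when $\deg^+(v)\ge d-1$, which neither a sink nor a max-$|S(v)|$ vertex need satisfy. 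The paper's answer to your own question of where the extra unit comes from is: take $d$ minimal such that $D$ is $\Vec{K}_{1,d}$-free (so $\Delta^+(D)\ge d-1$, else $D$ is already $\Vec{K}_{1,d-1}$-free) and take $v$ of \emph{maximum out-degree}; then the coefficient $(d-1)+\frac{1}{\deg^+(v)}$ is at most $t$ and the hypothesis follows.

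Relatedly, the inequality you assert from the double count, $|S(v)|\le(d-1)|N^+(v)\cup N^+(S(v))|+O(1)$, is not ``exactly'' the hypothesis of Theorem~\ref{Thm: Main CounterStructure} and is too weak for the stated constant: any positive additive slack $C$ would have to be paid for by the $\frac{1}{d-1}|N^+(v)\cup N^+(S(v))|$ term, i.e.\ you would need $|N^+(v)\cup N^+(S(v))|\ge(C+1)(d-1)$, which you cannot guarantee. The slack-free bound $|S(v)|\le(d-1)\deg^+(v)$ evaluated at a maximum out-degree vertex is what makes the computation close. The remaining parts of your reduction (hereditarity of the class, $\frac{t}{t+1}=\frac{d^2-2d+2}{d^2-d+1}$, inheriting the polynomial-time algorithm) are correct and match the paper.
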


\begin{proof}
    For any digraph $D$, and vertex $v \in V(D)$, 
    $S(v)$ being the set of sources in $D - N^+[v]$, implies $S(v)$ is an independent set in $D$.
    Thus each vertex in $N^{+}(v)$ can contribute at most $d-1$ 
    vertices to $S(v)$ if $D$ is $\Vec{K}_{1,d}$-free.
    Let $d$ be the least integer such that $D$ is $\Vec{K}_{1,d}$-free,
    then $\Delta^+(D)\geq d-1$, otherwise, $D$ is trivially $\Vec{K}_{1,d-1}$-free.
    Select $v$ to be a vertex with maximum out-degree, then 
    $\frac{(d-1)\deg^+(v) + 1}{\deg^+(v)}|N^+(v) \cup N^+(S(v))| \geq  |S(v)|+1$.
    Treating $d$ as fixed, $\frac{(d-1)\deg^+(v) + 1}{\deg^+(v)}$ decreases to $d-1$ monotonically as $\deg^+(v)$ tends to infinity.
    Since $\deg^+(v)\geq d-1$,
    we conclude that 
    $t|N^+(v) \cup N^+(S(v))| \geq  |S(v)|+1$
    where $t = \frac{d^2-2d+2}{d-1}$.
    The result follows by Theorem~\ref{Thm: Main CounterStructure}
\end{proof}

\begin{figure}[h!]
    \centering
    \includegraphics[scale = 1.0]{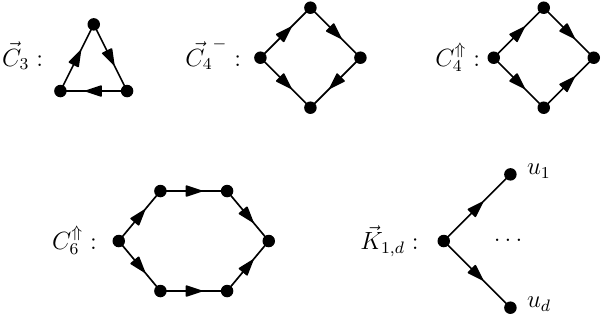}
    \caption{Some orientations of $C_3,C_4,C_6$, and $K_{1,t}$ that are of interest.}
    \label{fig:small digraphs}
\end{figure}

Note that it was shown in \cite{ai2023results}
that every sourceless $\Vec{K}_{1,3}$-free digraph has a quasi-kernel of order at most $\frac{n}{2}$.
An immediate consequence of this is that all sourceless digraphs with maximum out-degree $2$ have a small quasi-kernel.
Observe that in the same way
Corollary~\ref{Coro: Induced Stars} provides a new bound on the size of a smallest quasi-kernel in 
digraphs with bounded maximum out-degree.
To demonstrate how our work can be used to improve these bounds further for digraphs with bounded maximum out-degree, 
we provide an improved bound for digraphs with maximum out-degree $3$.

\begin{theorem}\label{Thm: Out-degree 3}
    If $D$ is a sourceless digraph with maximum out-degree $\Delta^+(D)\leq 3$,
    then $D$ contains a quasi-kernel of order at most $\frac{4n}{7}$.
\end{theorem}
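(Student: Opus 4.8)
The plan is to run Algorithm~\ref{alg:main} (the greedy quasi-kernel algorithm) and to show, by refining the counting argument behind Theorem~\ref{Thm: Main CounterStructure} in the special case $\Delta^+(D)\le 3$, that at every greedy step the ratio of ``vertices killed'' to ``vertices possibly added to $Q$'' is at most $\tfrac47$, i.e. that one can always pick a vertex $v$ with $v\notin N^+(S(v))$ and $\tfrac47\,|N^+(v)\cup N^+(S(v))|\ge |S(v)|+1$, or $v\in N^+(S(v))$ and $\tfrac47\,|N^+(v)\cup N^+(S(v))|\ge |S(v)|$. Plugging $t=\tfrac47$ into Theorem~\ref{Thm: Main CounterStructure} then gives $|Q|\le \tfrac{tn}{t+1}=\tfrac{4n}{11}\cdot\ldots$ — wait, that's not $\tfrac{4n}{7}$; so the actual statement must instead be that $t=\tfrac43$ works (since $\tfrac{(4/3)n}{7/3}=\tfrac{4n}{7}$). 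So the real target is: \emph{for every sourceless $D$ with $\Delta^+(D)\le 3$ there is a vertex $v$ with $v\notin N^+(S(v))$ and $\tfrac43|N^+(v)\cup N^+(S(v))|\ge|S(v)|+1$, or $v\in N^+(S(v))$ and $\tfrac43|N^+(v)\cup N^+(S(v))|\ge|S(v)|$.} First I would note that the crude bound from Corollary~\ref{Coro: Induced Stars} with $d=4$ (a digraph with $\Delta^+\le 3$ is $\Vec K_{1,4}$-free) only gives $t=\tfrac{10}{3}$ and hence $\tfrac{10n}{13}$, far from $\tfrac47$; the improvement must come from exploiting that $N^+(S(v))$ is itself small and overlaps heavily with $N^+(v)$ when out-degrees are at most $3$.

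The key quantitative step is a local case analysis at the chosen vertex $v$. Since $|N^+(v)|\le 3$ and each $w\in N^+(v)$ has $|N^+(w)|\le 3$, every source in $S(v)$ sits in some $N^+(w)$ for $w\in N^+(v)$, giving $|S(v)|\le 3|N^+(v)|\le 9$ in the worst case, but more importantly the union $N^+(v)\cup N^+(S(v))$ also has size at most $3+3|S(v)|$, and we want to show $|S(v)|$ cannot be much larger than $\tfrac43$ times this union minus the $\pm1$ correction. The natural way to force the ratio down is to show that we can always choose $v$ so that the sources in $S(v)$ do \emph{not} all hang off distinct private out-neighbours of $v$: if many sources share out-neighbours, or if the out-neighbours of $v$ have few sources among them, the ratio improves. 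I would make this precise by a discharging/extremal choice: among all candidate vertices, pick $v$ minimizing $\deg^+(v)$ subject to $S(v)\ne\emptyset$, or pick $v$ to be a source itself (every sourceless digraph has vertices but the greedy step of Theorem~\ref{Thm: Main CounterStructure} produces the relevant $v$). When $\deg^+(v)\le 2$ the bound $|S(v)|\le 2\cdot 3=6$ against $|N^+(v)\cup N^+(S(v))|\ge \deg^+(v)$ is already very favourable; the tight case is $\deg^+(v)=3$ with all three out-neighbours themselves of out-degree exactly $3$ and all nine targets distinct, which would require $D$ to contain $\Vec K_{1,3}$ copies stacked in a tree-like fashion — and one can then re-root at one of those out-neighbours (which has out-degree $3$ but now its own $S$-set is smaller because $v$'s neighbourhood is already dominated) to get a better ratio.

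The main obstacle I anticipate is the bookkeeping in the ``$v\in N^+(S(v))$ versus $v\notin N^+(S(v))$'' dichotomy combined with overlaps: when $v\in N^+(S(v))$ we only need $\tfrac43|N^+(v)\cup N^+(S(v))|\ge|S(v)|$ (no $+1$), which is the easier case, but showing that we can \emph{always} find such a $v$ — or else find one in the strict case with the $+1$ — requires ruling out a configuration where every vertex $v$ with nonempty $S(v)$ has its out-neighbourhood entirely ``wasted'' on sources that don't point back to it. I would handle this by the same minimal-counterexample framing as Chv\'atal--Lov\'asz: if no good $v$ exists, derive a forbidden sub-configuration (essentially many disjoint out-stars with no back-edges, which a sourceless digraph with $\Delta^+\le 3$ cannot sustain globally because the sources must themselves have in-neighbours, and those in-neighbours have out-degree $\le 3$, forcing overlaps via a counting/Hall-type argument on the bipartite incidence between $S(v)$ and potential re-roots). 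The last step — verifying $|Q|\le \tfrac{4n}{7}$ — is then immediate from Theorem~\ref{Thm: Main CounterStructure} with $t=\tfrac43$, modulo checking that the hereditary-class hypothesis is satisfied (the class of digraphs with $\Delta^+\le 3$ is trivially hereditary on induced subgraphs, and out-degrees only drop under vertex deletion, so the local existence statement we prove holds for every sourceless member).
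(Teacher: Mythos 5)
Your high-level skeleton is the right one and matches the paper's: the target constant is indeed $t=\tfrac43$ (so that $\tfrac{tn}{t+1}=\tfrac{4n}{7}$), and the theorem does follow once one shows that a smallest counterexample with $\Delta^+\le 3$ would have to contain a vertex $v$ with $\tfrac43|N^+(v)\cup N^+(S(v))|\ge |S(v)|+1$ (Corollary~\ref{Coro: Smallest} then gives the contradiction). The problem is that this existence statement is essentially the entire theorem, and your proposal does not prove it: the crucial steps are left as ``a discharging/extremal choice'' and ``a counting/Hall-type argument,'' with no case analysis actually carried out. Worse, one of the few concrete claims you do make is false: you assert that when $\deg^+(v)\le 2$ the situation is ``already very favourable'' because $|S(v)|\le 6$. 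But $|N^+(v)\cup N^+(S(v))|$ has no useful lower bound there --- if the vertices of $S(v)$ are sinks then $N^+(S(v))=\emptyset$ and $\tfrac43|N^+(v)\cup N^+(S(v))|\le \tfrac83$, which is below $|S(v)|+1$ already for $|S(v)|\ge 2$. Vertices of small out-degree are precisely the hard cases (compare the construction in Proposition~\ref{Prop: Alg not perfect}, where the danger comes from $S(v)$ consisting of sinks), and the paper spends three full lemmas (Lemmas~\ref{Lemma: out-3, min-0}--\ref{Lemma: out-3, min-2}) ruling out sinks, out-degree-$1$ and out-degree-$2$ vertices in a putative counterexample before it can even assume $3$-out-regularity.

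Your re-rooting intuition is also backwards relative to what makes the argument close. You suggest moving to an out-neighbour $w$ of $v$ because ``its own $S$-set is smaller''; but $S(w)$ is computed in $D$ itself, and in the actual proof the point is the opposite: assuming no good vertex exists forces $|S(w)|$ to be \emph{large} (at least $4$, and in the regular case at least $|S(u)|$ for a vertex $u$ chosen to \emph{minimize} $|S(u)|$), and since $S(w)\cap N^+(u)=\emptyset$ while $S(w)\subseteq N^+(S(u))$ whenever $N^+(w)\subseteq S(u)$, this largeness inflates $|N^+(S(u))\setminus N^+(u)|$ and makes $u$ itself satisfy the $\tfrac43$ inequality --- the contradiction. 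Your proposed extremal choice (minimize $\deg^+(v)$ subject to $S(v)\ne\emptyset$) is not the one that works, and nothing in the sketch handles the genuinely tight configurations (e.g.\ $\deg^+(u)=3$ with $|N^+(w)\cap S(u)|=2$ and $|S(u)|=5$ or $6$), which is where all the effort in the paper's proof goes. As written, the proposal is a plausible plan with a wrong shortcut at exactly the hard cases, not a proof.
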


Next, we show a similar result for digraphs with restricted orientations of $3,4$, and $6$ cycles.
See Figure~\ref{fig:small digraphs} for drawings of these orientations.

\begin{theorem}\label{Thm: C_3,C_4,C_6}
    Let $d\geq 2$ be a fixed integer.
    If $D$ is a sourceless digraph with maximum out-degree $\Delta^+(D) \leq d$, and no $\Vec{C}_3, \Vec{C}^{-}_4,C_4^{\Uparrow},$ or $C_6^{\Uparrow}$ as a subgraph,
    then $D$ has a quasi-kernel of order at most $\frac{(d^2+4)n}{(d+2)^2}$.
    Moreover, such a quasi-kernel can be found in $O(n^2)$ time.
\end{theorem}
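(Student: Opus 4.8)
The plan is to apply Theorem~\ref{Thm: Main CounterStructure} with a suitably chosen constant $t$, so the entire task reduces to verifying the structural hypothesis: for every sourceless $D \in \mathcal{H}$ (where $\mathcal{H}$ is the hereditary class of digraphs with $\Delta^+ \le d$ and none of $\Vec{C}_3, \Vec{C}_4^-, C_4^{\Uparrow}, C_6^{\Uparrow}$ as a subgraph), there is a vertex $v$ with $v \notin N^+(S(v))$ and $t|N^+(v)\cup N^+(S(v))| \ge |S(v)|+1$, or $v \in N^+(S(v))$ with the weaker inequality. Setting $t = \frac{d^2+4}{2d+4}$ (so that $\frac{tn}{t+1} = \frac{(d^2+4)n}{(d+2)^2}$, since $t+1 = \frac{(d+2)^2}{2(d+2)} = \frac{d+2}{2}$) is what the target bound forces, so I would reverse-engineer from there. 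I would pick $v$ to be a vertex of maximum out-degree, say $\deg^+(v) = \delta \le d$, and bound $|S(v)|$ from above while bounding $|N^+(v) \cup N^+(S(v))|$ from below.

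The key counting steps: First, $S(v)$ is the source set of $D - N^+[v]$, hence independent in $D$, and each source in $S(v)$ has all its in-neighbours inside $N^+(v)$ (a source of $D-N^+[v]$ that is not in $N^+[v]$ must have every in-neighbour in $N^+[v] \setminus \{v\} = N^+(v)$, using $v \notin N^+(S(v))$ generically or handling the exceptional case separately). So $S(v)$ is ``reached'' from $N^+(v)$ in one step. The forbidden-subgraph hypotheses then constrain how the out-neighbourhoods of $N^+(v)$ can overlap with $S(v)$ and with each other: forbidding $\Vec{C}_3$ rules out edges between two out-neighbours of $v$ creating a triangle, forbidding $\Vec{C}_4^-$ and $C_4^{\Uparrow}$ controls how two vertices of $N^+(v)$ can have a common out-neighbour in $S(v)$ or short alternating structure, and forbidding $C_6^{\Uparrow}$ controls length-$6$ configurations through $S(v)$. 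The upshot I expect is a bound like $|S(v)| \le \delta \cdot d$ combined with $|N^+(v) \cup N^+(S(v))| \ge$ something like $\delta + (\text{a good fraction of } |S(v)|)$, where the girth-type conditions ensure the second-neighbourhood $N^+(S(v))$ does not collapse too much onto $N^+(v)$. One then checks that $\frac{d^2+4}{2d+4}\bigl(\delta + c|S(v)|\bigr) \ge |S(v)| + 1$ holds for all $1 \le \delta \le d$, with the extremal case being $\delta = d$ (or possibly a small $\delta$, which must be checked separately — this is where $d \ge 2$ is used).

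The main obstacle will be pinning down exactly which vertices the four forbidden orientations force to appear in $N^+(v) \cup N^+(S(v))$, i.e., getting a clean lower bound on $|N^+(v) \cup N^+(S(v))|$ in terms of $|S(v)|$. Since $S(v)$ is independent and each of its vertices has positive out-degree in $D$ (as $D$ is sourceless, every $s \in S(v)$ has an in-neighbour, but more relevantly one needs out-edges from $S(v)$), one wants to say these out-edges land in many distinct vertices not already counted. The forbidden configurations are precisely designed to prevent two sources in $S(v)$ from having heavily overlapping forward neighbourhoods or from pointing back into $N^+(v)$ in a way that would let $|N^+(v) \cup N^+(S(v))|$ be small. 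I would organize this as a short sequence of claims: (i) $\sum_{u \in N^+(v)} |N^+(u) \cap S(v)| \ge |S(v)|$ with a matching per-vertex cap of $d$ from the out-degree bound; (ii) the $C_4$-type forbidden subgraphs force the sets $N^+(s) \setminus N^+(v)$ for distinct $s \in S(v)$ to be "spread out"; (iii) the $C_6^{\Uparrow}$ condition handles the remaining length-$6$ overlaps. Once the inequality $t|N^+(v)\cup N^+(S(v))| \ge |S(v)| + 1$ is established with $t = \frac{d^2+4}{2d+4}$, Theorem~\ref{Thm: Main CounterStructure} immediately yields a quasi-kernel of order at most $\frac{tn}{t+1} = \frac{(d^2+4)n}{(d+2)^2}$; for the $O(n^2)$ running time claim, I would note that under these hypotheses the relevant vertex $v$ (a maximum out-degree vertex, with the structure being automatic) can be located and the algorithm's per-iteration work simplified, improving on the generic $O(n^4)$ bound of Theorem~\ref{Thm: Main CounterStructure}.
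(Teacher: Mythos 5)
There is a genuine gap, and it is exactly where you flagged uncertainty: the lower bound on $|N^+(v)\cup N^+(S(v))|$ in terms of $|S(v)|$ fails for your choice of $v$. Picking a maximum out-degree vertex does not work, because vertices of $S(v)$ that are sinks contribute nothing to $N^+(S(v))$. Concretely, let $v$ have $d$ out-neighbours, each of which points to $d$ distinct sinks of in-degree $1$ (this configuration contains none of the forbidden orientations). Then every such sink is isolated in $D-N^+[v]$, so $|S(v)|\geq d^2$ while $|N^+(v)\cup N^+(S(v))|$ can be as small as about $d$; the inequality would then require $t\geq d$, far beyond any usable constant. Your sketch quietly assumes vertices of $S(v)$ have positive out-degree (``one needs out-edges from $S(v)$''), but sourcelessness only gives in-edges. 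The paper's proof is built precisely around this obstruction: it defines $T(v)$ (isolated vertices of $D-N^+[v]$) and $k_v=|\{u\in N^+(v):\deg^-(u)=1,\ \deg^+(u)=0\}|$, chooses $v$ maximizing $k_v$ among vertices with $\deg^+(v)\geq 1$, and uses three structural lemmas (every $u\in S(v)$ has in-degree $1$; distinct sources in $S(v)$ have disjoint out-neighbourhoods avoiding $N^+[v]$; hence $|N^+(S(v))\setminus N^+(v)|\geq|S(v)\setminus T(v)|$ --- these are essentially your claims (i)--(iii), so that part of your intuition is right) together with an averaging argument showing that if $v$ fails the inequality then some $w\in N^+(v)$ satisfies $|T(v)\cap N^+(w)|>k_v$, i.e.\ $k_w>k_v$, contradicting maximality. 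Without this choice of $v$ and the treatment of $T(v)$, the verification of the hypothesis of Theorem~\ref{Thm: Main CounterStructure} does not go through.

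Two smaller points. Your value of $t$ is mis-derived: since $(d+2)^2-(d^2+4)=4d$, the bound $\frac{tn}{t+1}=\frac{(d^2+4)n}{(d+2)^2}$ forces $t=\frac{d^2+4}{4d}$, whereas your $t=\frac{d^2+4}{2d+4}$ gives $\frac{tn}{t+1}=\frac{(d^2+4)n}{d^2+2d+8}$, which agrees with the target only at $d=2$. And the $O(n^2)$ running time is not automatic from ``take a maximum out-degree vertex'': the paper obtains it because the $k$-increasing walk locates a suitable $v$ in $O(1)$ time per greedy round (the walk has length at most $d$ and all checks are within a bounded out-ball), which again depends on the $k_v$ machinery rather than on a degree-extremal choice.
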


It is natural to compare the results from 
Corollary~\ref{Coro: Induced Stars} and Theorem~\ref{Thm: C_3,C_4,C_6}.
To this end we provide Table~\ref{table:ratios}, so that the reader can get a sense of how these bounds compare to one another as $d$ grows.

\renewcommand{\arraystretch}{1.5}
\begin{table}[!h]
\centering
\begin{tabular}{| c | c | c |} 
\hline \hline
$d$ & $K_{1,d}$-free & $\Delta^+(D) \leq d$, and no $\Vec{C}_3, \Vec{C}^{-}_4,C_4^{\Uparrow},$ or $C_6^{\Uparrow}$ subgraph \\ \hline \hline 
$4$ &  $\frac{10}{13}$ & $\frac{5}{9}$\\ \hline
$5$ & $\frac{17}{21}$ &  $\frac{29}{49}$\\ \hline
$6$ & $\frac{26}{31}$ &  $\frac{5}{8}$\\ \hline
$7$ & $\frac{37}{43}$  &  $\frac{53}{81}$ \\ \hline
$8$ & $\frac{50}{57}$  & $\frac{17}{25}$\\ \hline
$25$ & $\frac{577}{601}$ & $\frac{629}{729}$\\ \hline 
$50$ & $\frac{2402}{2451}$ &  $\frac{313}{338}$ \\ \hline
$100$ & $\frac{9802}{9901}$ & $\frac{2501}{2601}$ \\ \hline \hline
\end{tabular}
\caption{
For each value of $d$ we provide the constant factor on the upper bound of the order of a smallest quasi-kernel
from Corollary~\ref{Coro: Induced Stars} (left) and Theorem~\ref{Thm: C_3,C_4,C_6} (right).
}
\label{table:ratios}
\end{table}

\section{Constructing Small Quasi-Kernels}

This section outlines the algorithm we provide for building small quasi-kernels,
see Algorithm~\ref{alg:main}.
We provide a proof that the algorithm outputs a quasi-kernel, see Lemma~\ref{Lemma: Alg works},
then analyze
the size of the resulting quasi-kernels.
See for instance the proof of Theorem~\ref{Thm: Main CounterStructure}.
Finally, we show that this analysis is insufficient to prove Conjecture~\ref{Conj: small quasi}
for all digraphs, by demonstrating digraphs
where the bounds in 
Theorem~\ref{Thm: Main CounterStructure}
fail to accurately predict the correct order of a smallest quasi-kernel,
see Proposition~\ref{Prop: Alg not perfect}.

\begin{algorithm}[H]
\caption{An algorithm for constructing a quasi-kernel.}
\label{alg:main}
\begin{algorithmic}
\Require $D$ a sourceless digraph, $v$ a vertex in $D$.
\State $H = D - (N^+{[v]} \cup N^+[S(v)])$
\State $K = S(v)$.

\While{$H$ contains a source}
    \State Let $I$ be the set of isolated vertices in $H$.
    \State $H \gets (H - I)$

     \If{$H$ has a source}
        \State Let $x$ be a source in $H$ with $\deg^+_H(x)\geq 1$.
        \State $K \gets (K \cup \{x\})$
        \State $H \gets (H -N^+[x])$
    \EndIf

\EndWhile

\State Let $R$ be a smallest quasi-kernel in $H$.

\If{ $v \notin N^+(K \cup R)$}
    \State $K \gets (K\cup\{v\})$
\EndIf

\State $Q = K \cup R$.

\State \Return $Q$

\end{algorithmic}
\end{algorithm}

To start we will prove that the output of Algorithm~\ref{alg:main} is in fact a quasi-kernel in the input digraph.

\begin{lemma}\label{Lemma: Alg works}
    Let $D$ be a sourceless digraph and $v$ a vertex in $D$.
    If $D$ and $v$ are the inputs in Algorithm~\ref{alg:main}, 
    then the output of Algorithm~\ref{alg:main} is a quasi-kernel $Q$ in $D$.
\end{lemma}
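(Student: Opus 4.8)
The plan is to verify the two defining properties of a quasi-kernel separately: that the output $Q = K \cup R$ is independent in $D$, and that every vertex of $D$ is reached within directed distance $2$ from some vertex of $Q$. I would set up notation by tracking the sets through the execution: write $S = S(v)$, let $K$ denote the final set built in the \textbf{while} loop (so $S \subseteq K$), let $H^*$ be the digraph remaining when the loop terminates, and $R$ the quasi-kernel of $H^*$ chosen at the end. The key structural facts I would record first are: (i) $S(v)$ is an independent set in $D$ (it is a set of sources in an induced subdigraph, hence pairwise non-adjacent there, and membership as a source means in-degree zero, so no edges between them in $D$ either); (ii) each vertex $x$ added to $K$ inside the loop is a source of the current $H$, and immediately afterwards its entire closed out-neighbourhood $N^+_H[x]$ is deleted from $H$; and (iii) $H^*$ is an induced subdigraph of $D$ disjoint from $N^+[v] \cup N^+[S]$ and from $N^+[K \setminus S]$.

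For independence, I would argue in three layers. Within $K$: the starting part $S$ is independent by (i); when a new source $x$ is added, it has in-degree zero in the current $H$, so no earlier element of $K$ points to it, and since we then delete $N^+_H[x]$, no later element of $K$ can be pointed to by $x$ — and because $H$ is always an induced subdigraph of $D - N^+[v] - N^+[S]$, the same non-adjacency holds in $D$. Within $R$: immediate, since $R$ is independent in $H^*$ which is induced in $D$. Between $K$ and $R$: any vertex of $R$ lies in $H^*$, which by (iii) avoids $N^+[K]$, so no element of $K$ has an out-edge into $R$; and no vertex of $R$ has an out-edge into $K$ because each vertex of $K$ was removed from $H$ together with its out-neighbourhood, so it is not in $N^+(H^*)$. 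Finally the possible addition of $v$ at the end is guarded precisely by the test $v \notin N^+(K \cup R)$, and $v$ is adjacent to nothing in $K \cup R$ in the other direction since $K \cup R \subseteq D - N^+[v]$, so $v$ has no out-edge into it; hence $Q$ is independent.

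For the domination condition, partition $V(D)$ as $N^+[v] \,\cup\, N^+[S] \,\cup\, V(H^*) \,\cup\, Z$, where $Z$ collects everything deleted during the loop. Vertices in $N^+[S]$ are dominated at distance $\le 1$ by $S \subseteq Q$. Vertices in $N^+(v)$ are at distance $1$ from $v$; so if $v \in Q$ we are done for them, and if $v \notin Q$ the end-of-algorithm test guarantees $v \in N^+(K \cup R)$, i.e.\ some $q \in Q$ has $q \to v$, whence every $u \in N^+[v]$ satisfies $\dist(q,u) \le 2$. Vertices of $H^*$ are distance-$2$-dominated by $R$ \emph{inside} $H^*$ since $R$ is a quasi-kernel there, and these paths persist in $D$ because $H^*$ is induced. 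The crux is $Z$: each vertex $u \in Z$ was removed either as an isolated vertex of the current $H$ or because it lay in $N^+_H[x]$ for some source $x$ later placed in $K$. In the latter case $\dist(x,u)\le 1$ with $x \in Q$, so $u$ is dominated. The genuinely delicate case — and the step I expect to be the main obstacle — is a vertex $u$ removed as \emph{isolated} in $H$: it has in- and out-degree zero in that $H$, yet $D$ is sourceless, so $u$ has an in-neighbour $w$ in $D$, and I must show $w$ (or something reaching $w$) is close to $Q$. I would resolve this by exploiting \emph{why} $w \notin H$: $w$ must have been removed earlier, either into $N^+[v]$, into $N^+[S]$, or into $N^+[x]$ for some $x\in K$ added before $u$ became isolated — in each case $w \in N^+[q]$ for some $q \in Q$, giving $\dist(q,u) = \dist(q,w) + 1 \le 2$. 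One has to check the loop ordering carefully so that the relevant $x$ is indeed already in $K$ at the moment $u$ is deleted (isolated vertices are stripped at the top of each iteration before the new source is chosen, which is exactly what makes this work), and handle the boundary case where $w \in N^+[v]$ by again invoking the final $v$-test. Assembling these cases shows every vertex is within distance $2$ of $Q$, completing the proof.
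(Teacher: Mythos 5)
Your overall plan coincides with the paper's proof: show $Q=K\cup R$ is independent, then verify distance-$2$ domination by a case analysis on why each vertex left $H$ (initial deletion, deleted with $N^+[x]$ for some $x\in K$, deleted as isolated, or surviving into the final $H$ where $R$ works). Most cases are handled correctly. The genuine gap is in the subcase you yourself flag as the main obstacle: a vertex $u$ deleted as \emph{isolated} whose chosen in-neighbour $w$ lies in $N^+[v]$. Your proposed repair, ``invoking the final $v$-test,'' does not close it: if $v\notin Q$, the test only supplies some $q\in Q$ with $q\to v$, and the best path you obtain is $q\to v\to w\to u$, of length $3$; so the assertion ``in each case $w\in N^+[q]$ for some $q\in Q$'' fails exactly when $w\in N^+(v)$ and $v$ is not added to $K$.

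The missing observation is that you are free to choose a better in-neighbour: $u$ cannot have \emph{all} of its in-neighbours inside $N^+[v]$, for then $u$ would be a source of $D-N^+[v]$, i.e. $u\in S(v)\subseteq N^+[S(v)]$, and $u$ would have been deleted before the while loop rather than as an isolated vertex of some later $H$. So pick $w\in N^-(u)$ with $w\notin N^+[v]$. Since $u$ is isolated in the current $H$, this $w$ was deleted earlier, and the only possibilities are $w\in N^+(S(v))$ or $w\in N^+(x)$ for some $x$ already in $K$: indeed $w\in S(v)$ would force $u\in N^+(S(v))$; and $w$ equal to such an $x$, or $w$ itself deleted as isolated, is impossible because at the relevant moment both $w$ and $u$ would still be in $H$ and the edge $w\to u$ would contradict the isolation (this is where your loop-ordering remark does the work). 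In either remaining case $w\in N^+(K)$, so $\dist(q,u)\leq 2$ for some $q\in K\subseteq Q$; this is also the fact the paper is implicitly using when it asserts that the neighbours of an isolated-deleted vertex are out-neighbours of $K$. A smaller repair in the independence part: ``$x$ has in-degree zero in the current $H$'' does not rule out edges from \emph{earlier} elements of $K$ into $x$ (those vertices are no longer in $H$); such edges are excluded because the closed out-neighbourhoods of earlier elements were deleted while $x$ survived, whereas the in-degree-zero fact is what excludes edges into $x$ from vertices added later and from $R$ (and, for $x\to s$ with $s\in S(v)$, use that $N^-(s)\subseteq N^+[v]$ while $x\notin N^+[v]$). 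With these corrections your argument goes through and is essentially the paper's.
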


\begin{proof}
    Let $D$ be a sourceless digraph and $v$ a vertex in $D$.
    Let $Q$ be the output of Algorithm~\ref{alg:main} with inputs $D$ and $v$.
    
    We begin by showing $Q$ is an independent set.
    Next, we show that for all $w \in V(D)$ there exists a $u\in Q$ such that $\dist(u,w)\leq 2$.
    Suppose the Algorithm~\ref{alg:main} ended after $i$ iterations of the while loop.
    Notice that $i\geq 0$.

\vspace{0.25cm}
\noindent\underline{Claim.1:} $Q$ is an independent set.
\vspace{0.25cm}  

    If $i = 0$, then $V(D) = N^+{[v]} \cup N^+[S(v)]$.
    In this case, $K = S(v)$ if $v \in N^+(v)$, and $K = S(v) \cup \{v\}$ if $v \notin N^+(v)$.
    Since $S(v)$ is the set of sources in $D-N^+[v]$, it is clear that $S(v)$ is an independent set.
    Moreover, $S(v) \cup \{v\}$ is an independent set unless $v \in N^+(S(v))$.
    Thus, $K$ is an independent set when $i = 0$.
    Observing that when $i = 0$, $V(H) = \emptyset$ we note that $R = \emptyset$
    implying that $Q = K$ is an independent set.

    Now consider $i\geq 1$.
    Since the set $Q$ is the union of $K$, 
    which is monotone increasing for each round through the while loop,
    and the set $R$,
    if $Q$ is not independent then either 
    $R$ is not an independent set, 
    $K$ is not an independent set,
    or there is an edge between a vertex of $K$ and a vertex of $R$.
    Observe that $R$ is by chosen to be a quasi-kernel of an induced subgraph of $D$.
    Hence, $R$ is necessarily an independent set.
    We consider the two remaining cases separately.

    If $K$ is not an independent set then there is a smallest $j$ such that $K$ is independent after the 
    $j^{\text{th}}$ iteration of the while loop $K$ is not an independent set, or $K$ is an independent set following the while loop,
    but not an independent set at the end of the algorithm.
    This second option is only possible if $v \in K$, $K - \{v\}$ is an independent set, 
    but $K$ is not an independent set.
    Notice that all vertices in $K$ belong to $V(D)\setminus N^+[v]$,
    while $v \in K$ only if $v \notin N^+(K)$.
    Thus, if the second case is not possible.
    Suppose then, for contradiction, that such an integer $j$ exists.
    By the same argument as the $i=0$ case, we note that $K$ is an independent set before the first iteration of the while loop, 
    so we can suppose without loss of generality that $K$ begins the $j^{\text{th}}$ iteration of the while loop as an independent set.

    The  $j^{\text{th}}$ iteration of the while loop begins by deleting isolated vertices.
    If the removal of isolated vertices leaves $H$ sourceless, then 
    either no vertex is added to $K$, or $v$ is the only vertex added to $K$.
    If no vertex is added, since $K$ began the  $j^{\text{th}}$ iteration of the loop as 
    and independent set, $K$ ends the $j^{\text{th}}$ iteration of the loop as 
    and independent set.
    This contradicts our choice of $j$, so we suppose that after deleting isolated vertices in $H$, there exists a source in $H$.

    Let $x$ be a source in $H$, after the deletion of isolated vertices during the $j^{\text{th}}$ iteration of the loop.
    Since $x$ is a source that was not deleted as an independent vertex, $\deg^+_H(x) \geq 1$.
    Suppose without loss of generality that $x$ is the vertex added to $K$ in the $j^{\text{th}}$ iteration of the while loop.
    Since the out-neighbours of each vertex already in $K$ are deleted from $H$,
    $N^-(x) \cap K = \emptyset$.
    Moreover, $x \notin N^+[v]$.
    Since every vertex in $K$, distinct from $x$, was a source in $D-N^+[v]$
    or in another prior iteration of $H$ which included $x$,
    we note that $N^+(x) \cap K = \emptyset$.
    Hence, $K$ is an independent set at the end of the  $j^{\text{th}}$ iteration of the loop.
    Again, this contradicts our choice of $j$, implying that the final choice of $K$ is in fact an independent set.

    The final possibility is that there exists an edge between a vertex of $K$ and a vertex of $R$.
    As before, if $v \in K$, then $v$ has no neighbours in $K\cup R$.
    Suppose then there is a vertex $x \in K\setminus \{v\}$ and $y \in R$
    such that $x$ and $y$ are adjacent.
    Since $R$ is a set of vertices in $H$, $y \notin N^+[x]$ given these vertices were removed from $H$ when $x$ was added to $K$.
    Similarly, since $R$ is a set of vertices in $H$, $x \notin N^+[y]$, since $x$ was sourceless when it was added to $K$.

    We conclude that $Q$ is an independent set as required.
\hfill $\diamond$
\vspace{0.25cm}

\vspace{0.25cm}
\noindent\underline{Claim.2:}  For all $w \in V(D)$ there exists a vertex $u\in Q$ such that $\dist(u,w)\leq 2$.
\vspace{0.25cm}  

    For contradiction suppose there exists a vertex $w \in V(D)$ such that for all $u \in Q$, $\dist(u,w) > 2$.
    To begin consider the case where $w$ is a vertex in the final iteration of $H$.
    In this case $R \subseteq Q$ is a quasi-kernel in $H$ implying that there exists a vertex $u \in R \subseteq Q$ 
    such that $\dist_D(u,w)\leq \dist_H(u,w) \leq 2$.
    Since this is a contradiction we suppose that $w$ is not a vertex in the final iteration of $H$.

    Then $w$ is a vertex in $D$ not in $H$.
    This implies $w \in N^+[K]$, or $w$ was removed as an isolated vertex, or $w \in N^+[v]$ and $v \notin K$.
    If $w \in N^+[K]$, then there exist a vertex $u \in K \subseteq Q$ 
    such that $\dist_D(u,w) = 1$.
    Since $D$ is sourceless, if $w$ was removed as an isolated vertex, then this implies the neighbours of $w$ are out neighbours of $K$, 
    so exist a vertex $u \in K \subseteq Q$ 
    such that $\dist_D(u,w) = 2$.
    Otherwise, $w \in N^+(v)$ and $v \notin K$.
    Recall that $v \notin K$ implies that $N^-(v) \cap Q \neq \emptyset$,
    so exist a vertex $u \in Q$ 
    such that $\dist_D(u,w) \leq 2$. 
    Since this is a contradiction for all $w \in V(D)$ there exists a vertex $u\in Q$ such that $\dist(u,w)\leq 2$.
\hfill $\diamond$
\vspace{0.25cm}

Thus, $Q$ is an independent set such that for all $w \in V(D)$ there exists a vertex $u\in Q$ such that $\dist(u,w)\leq 2$.
This completes the proof.
\end{proof}

With Lemma~\ref{Lemma: Alg works} established we are prepared to prove Theorem~\ref{Thm: Main CounterStructure}.

\begin{proof}[Proof of Theorem~\ref{Thm: Main CounterStructure}]
Let $t\geq 1$ be a constant.
and let $\mathcal{H}$ be a hereditary class of digraphs such that for all sourceless digraphs $D \in \mathcal{H}$
there exists a vertex $v \in V(D)$ such that $v \notin N^+(S(v))$ and
\[
t|N^+(v) \cup N^+(S(v))| \geq |S(v)|+1,
\]
or  $v \in N^+(S(v))$ and
\[
t|N^+(v) \cup N^+(S(v))| \geq |S(v)|.
\]
Suppose 
$n$ is the least integer such that there exists a sourceless digraph
$D \in \mathcal{H}$ 
whose smallest quasi-kernels have order greater than $\frac{tn}{t+1}$.
Let $D \in \mathcal{H}$ be such a digraph.

By assumption there exists a vertex
$v \in V(D)$ such that $v \notin N^+(S(v))$ and
\[
t|N^+(v) \cup N^+(S(v))| \geq |S(v)|+1,
\]
or  $v \in N^+(S(v))$ and
\[
t|N^+(v) \cup N^+(S(v))| \geq |S(v)|.
\]
Let $v$ be such a vertex.
Let $Q$ be the output of Algorithm~\ref{alg:main} for inputs $D$ and $v$.
Let $K,R$ and $H$ be defined as Algorithm~\ref{alg:main},
so that $K$ and $H$ are their final iterations during the running of Algorithm~\ref{alg:main}.

Since $\mathcal{H}$ is a hereditary class $H \in \mathcal{H}$.
Moreover, it is easy to verify that $H$ has strictly fewer vertices than $D$, so $H$ has less than $n$ vertices.
Thus, $H$ is a sourceless digraph in $\mathcal{H}$ with less than $n$ vertices.
By the minimality of $n$ this implies $|R| \leq \frac{t|V(H)|}{t+1}$.

By Lemma~\ref{Lemma: Alg works}, $Q$ is a quasi-kernel in $D$, so by our choice of $D$, $|Q| > \frac{tn}{t+1}$.
Observe that $|Q| = |K| + |R|$.
We will show that $|K| \leq \frac{t(n-|V(H)|)}{t+1}$, which implies that 
\[
\frac{tn}{t+1} < |Q| \leq \frac{t(n-|V(H)|)}{t+1} + \frac{t|V(H)|}{t+1} = \frac{tn}{t+1},
\]
a contradiction.

Notice that when first defining $K$ and $H$, 
$|N^+[v] \cup N^+[S(v)]|$ are removed from $D$ to form $H$,
and $|S(v)|$ vertices are added to $K$.
Next, observe that each time the while loop is run, 
exactly one vertex is added to $K$ 
while at least $2$ vertices are removed from $H$.
Finally, observe that if $v \in N^+(S(v))$, then $v$ is not added to $K$ at the end of the algorithm.

Suppose the while loop is run exactly $i$ times in Algorithm~\ref{alg:main}.
Since $v \notin N^+(S(v))$ implies
\[
t|N^+(v) \cup N^+(S(v))| \geq |S(v)|+1,
\]
or  $v \in N^+(S(v))$ implies
\[
t|N^+(v) \cup N^+(S(v))| \geq |S(v)|,
\]
it is easy to verify that
\begin{align*}
    |K| \leq \frac{t|N^+[v] \cup N^+[S(v)]|}{t+1} + i
\end{align*}
while $|V(H)| \leq n - |N^+[v] \cup N^+[S(v)]|-2i$.
Given $t\geq 1$ it follows that
\begin{align*}
    |K| & \leq \frac{t|N^+[v] \cup N^+[S(v)]|}{t+1} + i \\
    & \leq \frac{t}{t+1}\Big(|N^+[v] \cup N^+[S(v)]| + \frac{t+1}{t}i\Big) \\
    & \leq \frac{t}{t+1}(n - V(H))
\end{align*}
This completes the proof that small quasi-kernels exist.
It remains to be shown that such quasi-kernels can be computed in $O(n^4)$ time.

    To begin we store $D$ as a dictionary whose keys are vertices, which point to a vertex's out-neighbourhood and in-neighbourhood, 
    stored as a pair of sets.
    This allows us to look up a fixed vertex $v$'s out-neighbourhood $N^+(v)$ and in-neighbourhood $N^-(v)$ in constant time,
    assuming the vertex $v$ is already known.

    The algorithm works in the obvious greedy manner.
    We identify a vertex $v$ where 
    \[
    t|N^+(v) \cup N^+(S(v))| \geq |S(v)|+1
    \]
    then we feed this vertex $v$ into the input of Algorithm~\ref{alg:main}, 
    recursively applying the same process until the result digraph is empty, then reconstructing
    a quasi-kernel in $D$ by combining the outputs at each step.
    
    Since the number of vertices is decreasing with each application of the first part of 
    Algorithm~\ref{alg:main}, there are at most $O(n)$ greedy steps.
    Each reconstruction step only involves checking the final condition of Algorithm~\ref{alg:main},
    which can be done in $O(n)$ time.
    So reconstructing the final quasi-kernel from the intermediate outputs requires $O(n^2)$ time,
    moreover this is done after all of the intermediate outputs are calculated, so these steps act additively to the time complexity.

    Once $v$ and $S(v)$ is identified, 
    applying the first part of Algorithm~\ref{alg:main} is $O(n)$ time.
    The real time bottleneck then is searching for a vertex $v$ where 
    \[
    t|N^+(v) \cup N^+(S(v))| \geq |S(v)|+1.
    \]
    Doing this naively takes $O(n^3)$ time. 
    
    We explore exactly why now.
    Based on how we have stored $D$ neighbourhood look ups are constant time, but reading a in-neighbourhood or out-neighbourhood 
    might be $O(n)$ time.
    So given a $v$ identifying $S(v)$ is $O(n)$ time, since deleting $N^+[v]$ is linear, and looking for sinks in this data structure is constant time.
    However, checking finding $N^+(S(v))\setminus N^+(v)$ may take $O(|S(v)|n) = O(n^2)$ steps.
    So checking the inequality is $O(n^2)$ time.
    Thus, if we search for a vertex $v$ by checking the inequality for each vertex in the digraph, 
    this will take $O(n^3)$ steps.

    Since we may run linearly many intermediate steps, that is greedy searches,
    the full algorithm may take as much as $O(n^4)$ steps.
    this completes the proof.
\end{proof}

As stated earlier, Theorem~\ref{Thm: Main CounterStructure} provides forbidden configurations in a smallest counterexample to Conjecture~\ref{Conj: small quasi}. We state these explicitly now. By exploiting these forbidden configurations
we obtain bounds on the size of quasi-kernels in Sections~3 and 4.

\begin{corollary}\label{Coro: Smallest}
    Let $\mathcal{H}$ be a hereditary class of digraphs and $t\geq 1$ a constant.
    If $D \in \mathcal{H}$ is a smallest digraph such that for all quasi-kernels $K$ in $D$, $|K| > \frac{tn}{t+1}$,
    then for all vertices $v$ in $D$,
    $
    t|N^+(v) \cup N^+(S(v))| < |S(v)|+1.
    $
\end{corollary}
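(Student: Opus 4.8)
The plan is to prove Corollary~\ref{Coro: Smallest} as an immediate contrapositive of Theorem~\ref{Thm: Main CounterStructure}, exploiting the fact that the theorem's hypothesis on $\mathcal{H}$ is really a statement about each sourceless member of $\mathcal{H}$, and that a smallest counterexample to the quasi-kernel size bound is necessarily sourceless.

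First I would record the trivial observation that if $D$ is a smallest digraph in $\mathcal{H}$ with every quasi-kernel of order greater than $\frac{tn}{t+1}$, then $D$ is sourceless: every source must belong to every quasi-kernel (a source has no in-neighbour, hence nothing within distance $2$ can dominate it unless it is itself in $Q$), but a sourceless digraph always admits a quasi-kernel by Chv\'atal--Lov\'asz, and deleting a source together with its closed out-neighbourhood... actually the cleanest route is: the classes we care about (or indeed Conjecture~\ref{Conj: small quasi}) concern sourceless digraphs, and Corollary~\ref{Coro: Smallest} should simply be read as: among sourceless digraphs in $\mathcal{H}$, if $D$ is smallest with all quasi-kernels large, then the inequality fails everywhere. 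So I would state the corollary's $D$ as sourceless (matching how it is used in Sections 3--4) and not belabour this point.

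Then the core step: suppose toward a contradiction that there is a vertex $v$ in $D$ with $t|N^+(v)\cup N^+(S(v))| \geq |S(v)|+1$. I split on whether $v\in N^+(S(v))$. If $v\notin N^+(S(v))$, this inequality is exactly the first hypothesis of Theorem~\ref{Thm: Main CounterStructure} witnessed at $v$; if $v\in N^+(S(v))$, then $|S(v)|+1 > |S(v)|$ gives $t|N^+(v)\cup N^+(S(v))| \geq |S(v)|$, the second hypothesis. Either way the single digraph $D$ satisfies the per-digraph condition that the proof of Theorem~\ref{Thm: Main CounterStructure} actually used — and indeed the proof of that theorem only invoked the hypothesis at the one vertex of a minimal counterexample, so nothing is lost by applying it to $\{D\}$ together with its hereditary closure. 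Running Algorithm~\ref{alg:main} on $D$ and $v$ and repeating the size accounting from the proof of Theorem~\ref{Thm: Main CounterStructure} (the bookkeeping $|K|\leq \frac{t|N^+[v]\cup N^+[S(v)]|}{t+1}+i$ against $|V(H)|\leq n-|N^+[v]\cup N^+[S(v)]|-2i$, combined with the inductive bound $|R|\leq \frac{t|V(H)|}{t+1}$ coming from minimality of $n$) yields a quasi-kernel $Q=K\cup R$ of order at most $\frac{tn}{t+1}$, contradicting the choice of $D$. Hence no such $v$ exists, i.e. $t|N^+(v)\cup N^+(S(v))| < |S(v)|+1$ for all $v$.

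I don't expect a genuine obstacle here — the statement is designed to be a restatement of Theorem~\ref{Thm: Main CounterStructure} in "forbidden-configuration" form. The one subtlety to get right is the hereditary-closure issue: Theorem~\ref{Thm: Main CounterStructure} is phrased for a class $\mathcal{H}$, whereas here we want the conclusion at a single $D$. The resolution is that the induced subgraph $H$ produced inside Algorithm~\ref{alg:main} lies in $\mathcal{H}$ by heredity and is smaller, so the minimality hypothesis on $D$ (smallest in $\mathcal{H}$ with the bad property) applies directly to $H$ — exactly as in the theorem's proof. So the whole argument is: instantiate the theorem's proof verbatim with the minimal counterexample being $D$ itself, using the assumed vertex $v$, and read off the contradiction. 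I would keep the written proof to a few lines, citing the proof of Theorem~\ref{Thm: Main CounterStructure} for the size computation rather than reproducing it.
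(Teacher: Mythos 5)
Your proposal is correct and is essentially the paper's own argument: assume a vertex $v$ with $t|N^+(v)\cup N^+(S(v))|\geq |S(v)|+1$, run Algorithm~\ref{alg:main} on $D$ and $v$, bound $|R|\leq \frac{t|V(H)|}{t+1}$ by minimality of $D$ (using heredity of $\mathcal{H}$) and $|K|\leq \frac{t(n-|V(H)|)}{t+1}$ by the choice of $v$, and conclude $|Q|\leq \frac{tn}{t+1}$, a contradiction. Your extra remarks on the sourcelessness of $D$ and the case split on whether $v\in N^+(S(v))$ are sensible clarifications but do not change the route.
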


\begin{proof}
    Suppose $D$ is a smallest digraph in $\mathcal{H}$ with no 
    quasi-kernel of order at most $\frac{tn}{t+1}$.
    Suppose that there exists a vertex $v$ with 
    $
    t|N^+(v) \cup N^+(S(v))| \geq |S(v)|+1.
    $
    in $D$.
    Then apply Algorithm~\ref{alg:main} with inputs $D$ and $v$,
    letting $H$, $K$, $R$, and $Q$ be defined as in Algorithm~\ref{alg:main} for this input.
    Since $D$ is a smallest digraph in $\mathcal{H}$ with this property, 
    $|R|\leq \frac{t|V(H)|}{t+1}$, and by our choice of $v$, $|K|\leq \frac{t(n - |V(H)|)}{t+1}$.
    Therefore, $Q = K\cup R$ is a quasi-kernel of size at most $\frac{tn}{t+1}$.
\end{proof}

The final part of this section is devoted to demonstrating the limitations of Algorithm~\ref{alg:main}.
In particular, we note that Corollary~\ref{Coro: Smallest} is insufficient to prove Conjecture~\ref{Conj: small quasi}.
Formally, we prove that following proposition.
See Figure~\ref{fig:payley sink} for an example of the $c = \frac{3}{2}$ case.

\begin{figure}[h!]
    \centering
    \includegraphics[scale = 0.9]{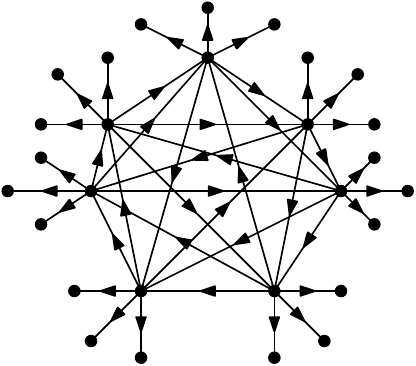}
    \caption{The $7$ vertex Paley Tournament with $3$ degree $1$ sinks appended to each vertex.}
    \label{fig:payley sink}
\end{figure}

\begin{proposition}\label{Prop: Alg not perfect}
    For every constant $c>0$ there exists a sourceless digraph $D$ such that for all
    $v \in V(D)$ with $\deg^+(v)\geq 1$, $c|N^+(v) \cup N^+(S(v))| < |S(v)|+1$.
\end{proposition}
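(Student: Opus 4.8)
The plan is to build an explicit family of digraphs parametrized by the constant $c$, using a fixed ``core'' that behaves well under the greedy quantity, with many sinks appended to force $S(v)$ to be large while keeping the out-neighbourhoods small. Concretely, I would start from a source-free digraph $T$ on a fixed number of vertices in which every vertex has out-degree at least $1$ and every closed out-neighbourhood $N^+[v]$ is small relative to $|V(T)|$ — a tournament-like object is ideal, and the $7$-vertex Paley tournament (as in Figure~\ref{fig:payley sink}) works because it is vertex-transitive, strongly connected, and each vertex has out-degree $3$, so $|N^+[v]| = 4 < 7$. The key feature I want is that for every vertex $v$, the set $V(T) \setminus N^+[v]$ is nonempty, so that after appending pendant sinks to the core vertices, many of those sinks survive as sources in $D - N^+[v]$.

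Next I would append, to each core vertex $u$, exactly $k$ new vertices each having a single in-edge from $u$ and no out-edges (pendant sinks), where $k$ will be chosen large depending on $c$. Call the resulting digraph $D$; it is sourceless because the core is sourceless and each pendant has in-degree $1$. Now I would compute $S(v)$ and $N^+(v) \cup N^+(S(v))$ for an arbitrary $v$ with $\deg^+(v) \ge 1$, splitting into two cases: $v$ a core vertex, or $v$ a pendant sink. If $v$ is a pendant sink then $\deg^+(v) = 0$, so that case is vacuous and need not be considered. If $v$ is a core vertex, then $D - N^+[v]$ contains all pendant sinks attached to the at least $|V(T)| - |N^+[v]|$ core vertices not in $N^+[v]$, all of which are sources; these are isolated, contributing nothing to $N^+(S(v))$. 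There may additionally be a few surviving core sources, but they are bounded by $|V(T)|$, a constant, and each contributes at most a constant number of out-neighbours. Thus $|S(v)| \ge k(|V(T)| - |N^+[v]|) = \Theta(k)$ grows linearly in $k$, while $|N^+(v) \cup N^+(S(v))|$ is bounded by a constant independent of $k$ (roughly $|N^+(v)| \le \Delta^+(T) + k$ from the pendants attached to $v$, plus $O(1)$ from surviving core sources — here I need to be slightly careful since $N^+(v)$ itself contains $k$ pendants, so the ratio is $|S(v)| / |N^+(v) \cup N^+(S(v))| \approx k(|V(T)| - |N^+[v]|) / (k + O(1)) \to |V(T)| - |N^+[v]| \ge |V(T)| - \Delta^+(T) - 1 \ge 3$ for the Paley example).

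That last point is the subtlety I expect to be the main obstacle: appending $k$ pendants to $v$ inflates $N^+(v)$ by $k$ as well, so naively the ratio $|S(v)|/|N^+(v)\cup N^+(S(v))|$ stays bounded by $|V(T)| - |N^+[v]|$, which for the Paley tournament is only $3$ — enough to beat $c = 3/2$ but not an arbitrary $c$. To push the ratio above any constant $c$, I would instead not attach pendants uniformly: keep the core $T$ fixed but make the pendants that hang off the other core vertices much more numerous, or better, iterate the construction — replace each ``sink'' in a working example for $c$ by a whole copy of a larger gadget, or simply choose a core tournament $T$ on $N$ vertices (a random tournament, or an explicit quadratic-residue tournament on $N$ vertices) for which $\max_v |N^+[v]|$ is close to $N/2$, so that $|V(T)| - |N^+[v]| \ge N/2 - O(\sqrt{N})$, and then attach only a single pendant sink (or none) to $v$'s ``own'' vertex while attaching $k$ to all others. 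Then $|N^+(v) \cup N^+(S(v))|$ is $O(N)$ (a constant once $N$ is fixed), while $|S(v)| \ge k \cdot (N/2 - O(\sqrt N))$, and choosing first $N$ then $k$ large makes the ratio exceed $c$. I would write the proof to first fix $N = N(c)$ so that $N/2 - O(\sqrt N) > $ something, then fix $k = k(c, N)$ large, and conclude $c\,|N^+(v)\cup N^+(S(v))| < |S(v)| + 1$ for all $v$ with positive out-degree by the case analysis above; verifying sourcelessness and the out-degree bookkeeping for the two vertex types is then routine.
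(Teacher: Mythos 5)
Your overall strategy (a regular tournament core with pendant sinks attached, choosing the core size and the pendant multiplicity as functions of $c$) is exactly the paper's construction, but two steps of your analysis are wrong as written. First, you identify the wrong pendants as the sources of $D-N^+[v]$: a pendant attached to a core vertex $u\notin N^+[v]$ keeps its unique in-neighbour $u$ after the deletion, so it is \emph{not} a source; the pendants that land in $S(v)$ are those attached to the deleted vertices of $N^+_T(v)$, so $|S(v)|\approx k\cdot\deg^+_T(v)$ rather than $k\,(|V(T)|-|N^+[v]|)$ (for a regular core the two counts happen to coincide, which hides the slip, but the structural claim ``these are isolated'' is made about the wrong set). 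Second, your dismissal of surviving core sources is unjustified: if a core vertex $u$ lies in $S(v)$, then $N^+(S(v))$ contains all $k$ pendants of $u$ (and those pendants are simultaneously absent from $S(v)$), so such a vertex contributes $k+\Theta(N)$, not $O(1)$, to $|N^+(v)\cup N^+(S(v))|$; with enough such vertices the ratio you need would collapse to a constant below $1$. You must either exclude them --- this is precisely why the paper takes a regular tournament with no pair $u,v$ satisfying $N^+(v)=N^-(u)$, which forces $S(v)$ to consist only of sinks and hence $N^+(S(v))=\emptyset$ --- or prove a bound (for instance, any two survivors in $N^-_T(v)$ are adjacent in a tournament, so at most one core vertex can be a source of $D-N^+[v]$); you do neither.

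The final construction you propose to beat an arbitrary $c$ is also ill-posed: you attach a single pendant to ``$v$'s own vertex'' and $k$ pendants to all others, but $v$ is universally quantified in the statement, so the digraph cannot be tailored to it; as soon as some core vertex carries $k$ pendants, for that vertex the claim $|N^+(v)\cup N^+(S(v))|=O(N)$ fails, since $N^+(v)$ alone already has size at least $k$. The repair is contained in your own ratio computation: attach $k$ pendants uniformly to every vertex of a regular tournament on $2n+1$ vertices satisfying the non-domination condition above. Then every vertex $v$ of positive out-degree has $|N^+(v)\cup N^+(S(v))|=n+k$ and $|S(v)|=nk$ with $N^+(S(v))=\emptyset$, so it suffices to choose $n$ and $k$ with $c(n+k)<nk+1$; in other words, both the core size and the pendant multiplicity must grow with $c$, not just $k$. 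This is exactly how the paper concludes.
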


\begin{proof}
    Let $c > 0$ be a fixed but arbitrary constant.
    Let $n$ and $k$ be any integers such that $c(n+k) < nk+1$.
    Trivially, such a pair $n$ and $k$ exists.
    Let $T$ be a tournament with $2n+1$ vertices such that every vertex $v \in V(T)$ has $\deg^+(v) = \deg^-(v) = n$
    and there is no pair of vertices $u$ and $v$ where $N^+(v) = N^-(u)$.
    Given $T$ we define $D$ by adding $k$ vertices $v_1,\dots, v_k$ to $T$ for each vertex $v \in V(T)$
    such that each $v_i$ is a sink with $N^-(v_i) = \{v\}$.

    Since $T$ as sourceless, $D$ is sourceless.
    Additionally, each non-sink in $D$ is a vertex of $T$.
    Hence, for all $v \in V(D)$ with $\deg^+(v)\geq 1$, we note that $\deg^+(v) = n+k$.
    Next, observe that by our assumption that there is no pair of vertices $u$ and $v$ where $N^+(v) = N^-(u)$ in $T$,
    for each $v \in V(D)$ with $\deg^+(v)\geq 1$, $|S(v)| = nk$ and every vertex in $S(v)$ is a sink.
    Thus, for all $v \in V(D)$ with $\deg^+(v)\geq 1$, $N^+(S(v)) = \emptyset$.
    
    It follows that for all
    $v \in V(D)$ with $\deg^+(v)\geq 1$,
    \begin{align*}
        c|N^+(v) \cup N^+(S(v))|  & = c(n+k) \\
        & < nk + 1 \\
        & = |S(v)|+1
    \end{align*}
    by our choice of $n$ and $k$.
    This completes the proof.
\end{proof}

\section{Digraphs with Small Out-Degree}

In this section we will study digraphs with maximum out-degree at most $3$.
Recall that it was shown in \cite{ai2023results} that all $\Vec{K}_{1,3}$-free digraphs,
and therefore all maximum out-degree $2$ digraphs, have a small quasi-kernel.
In this section we consider maximum out-degree $3$ digraphs.
We prove a weaker bound for this class, but
our proof yields a polynomial algorithm for finding such a quasi-kernel.

In a series of lemmas that build on each other, we
will prove a smallest counterexample to Theorem~\ref{Thm: Out-degree 3} must be $3$-out-regular.
That is, all vertices must be out-degree $3$.

\begin{lemma}\label{Lemma: out-3, min-0}
   If $D$ is a sourceless oriented graph with maximum out-degree $\Delta^+(D)\leq 3$,
   and no vertex $v$ such that 
    \[
    \frac{4}{3}|N^+(v) \cup N^+(S(v))| \geq |S(v)|+1,
    \]
    then $D$ has minimum out-degree $\delta^+(D)\geq 1$.
\end{lemma}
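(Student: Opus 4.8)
The plan is a proof by contradiction. Suppose $D$ is a sourceless oriented graph with $\Delta^+(D)\le 3$ in which no vertex $v$ satisfies $\frac43|N^+(v)\cup N^+(S(v))|\ge |S(v)|+1$, and yet $D$ has a sink $w$. Rewriting the forbidden inequality as $4|N^+(v)\cup N^+(S(v))|\le 3|S(v)|+2$ and using $|N^+(v)\cup N^+(S(v))|\ge \deg^+(v)$, every vertex $v$ satisfies $|S(v)|\ge \frac{4\deg^+(v)-2}{3}$; in particular $|S(v)|\ge 1,2,4$ when $\deg^+(v)=1,2,3$, so every non-sink has $S(v)\ne\emptyset$. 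I will also repeatedly use the combinatorial bound $S(v)\subseteq N^+(N^+(v))$, hence $|S(v)|\le\sum_{y\in N^+(v)}\deg^+(y)$, together with the fact that a sink contributes nothing to any out-neighbourhood.

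The first step is to show that every in-neighbour $u$ of $w$ has $\deg^+(u)=3$; such a $u$ is a non-sink with $1\le\deg^+(u)\le 3$. If $\deg^+(u)=1$ then $N^+(u)=\{w\}$, and since $w$ is a sink no vertex has $w$ as an in-neighbour, so $S(u)=\emptyset$, contradicting $|S(u)|\ge 1$. If $\deg^+(u)=2$, say $N^+(u)=\{w,a\}$, then $S(u)\subseteq N^+(a)$ consists of vertices whose only in-neighbour is $a$, so $|S(u)|\le 3$; the not-violating inequality for $u$ forces $|N^+(u)\cup N^+(S(u))|=2$, hence $N^+(S(u))\subseteq\{w,a\}$, hence (as $D$ is oriented) $N^+(x)\subseteq\{w\}$ for every $x\in S(u)$. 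Any $x\in S(u)$ with $N^+(x)=\{w\}$ would be a non-sink with $S(x)=\emptyset$, which is impossible, so every $x\in S(u)$ is a sink dominated only by $a$; then $a$ is a non-sink whose out-neighbourhood partly consists of these sinks, and inspecting $\deg^+(a)\in\{2,3\}$ one finds in each case that $S(a)$ is empty or has size at most $3<4$, contradicting $|S(a)|\ge\frac{4\deg^+(a)-2}{3}$.

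It remains to derive a contradiction when $\deg^+(u)=3$ for an in-neighbour $u$ of $w$; write $N^+(u)=\{w,a,b\}$. Here $a$ and $b$ must be non-sinks, since otherwise $S(u)$ lies inside the out-neighbourhood of the single remaining out-neighbour of $u$ and has size $\le 3<4$. Moreover $|S(u)|\ge 4$, $S(u)\subseteq N^+(a)\cup N^+(b)$ (so $\deg^+(a)+\deg^+(b)\ge 4$), and the not-violating inequality for $u$ forces $N^+(S(u))\subseteq\{w,a,b\}\cup P$ with $|P|\le 2$. One then runs through the possibilities for $(\deg^+(a),\deg^+(b))$. The model case is $(2,2)$: then $N^+(a)$ and $N^+(b)$ are disjoint $2$-sets contained in $S(u)$, and a short argument mirroring the $\deg^+(u)=2$ case shows that $S(a)$ and $S(b)$ each consist of two sinks, and in fact $S(a)=S(b)=P$, so every vertex of $P$ has its nonempty in-neighbourhood inside $N^+(a)\cap N^+(b)=\emptyset$, contradicting sourcelessness. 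The cases with $\deg^+(a)$ or $\deg^+(b)$ equal to $3$ follow the same template but demand careful bookkeeping of which vertices of $S(u)$, $S(a)$, $S(b)$ are sinks and of how the at most two vertices of $P$ lie inside $N^+(a)$ and $N^+(b)$, so that every branch ends in a forced source, an out-neighbourhood consisting entirely of sinks, or a violation of the bound $|S(v)|\ge\frac{4\deg^+(v)-2}{3}$.

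I expect the $\deg^+(u)=3$ case to be the main obstacle. The earlier cases collapse immediately from the two elementary consequences of the hypothesis, whereas in the last case the real work lies in choosing, in each branch, the right auxiliary vertex whose inequality to exploit (one of $u$, $a$, $b$, or a member of $S(u)$) and in keeping combinatorial control of the exceptional set $P$.
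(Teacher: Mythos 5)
Your reformulation of the hypothesis (every vertex satisfies $4|N^+(v)\cup N^+(S(v))|\le 3|S(v)|+2$, hence $|S(v)|\ge 1,2,4$ according as $\deg^+(v)=1,2,3$) and your handling of the cases $\deg^+(u)\in\{1,2\}$ for an in-neighbour $u$ of the sink are correct, and they run along essentially the same lines as the paper's proof. Your $(2,2)$ subcase of $\deg^+(u)=3$ is also sound; in fact it closes faster than you indicate, since $|S(u)|=4$ gives $4(3+|P|)\le 14$, forcing $P=\emptyset$, so already $S(a)=\emptyset$ contradicts $|S(a)|\ge 2$ (the assertion that $S(a),S(b)$ consist of sinks is neither needed nor justified, but it is harmless).

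The genuine gap is that the remaining subcases of $\deg^+(u)=3$ — precisely the ones you flag as ``the real work'', namely those with $\deg^+(a)$ or $\deg^+(b)$ equal to $3$, together with the $(1,3)$ configuration that your bound $\deg^+(a)+\deg^+(b)\ge 4$ does not exclude — are never argued; you only promise that every branch ends in a contradiction. This is where all of the lemma's difficulty sits (it occupies most of the paper's proof, which handles it by ordering the out-neighbours of $u$ by their overlap with $S(u)$ and splitting on whether the largest one is contained in $S(u)$), so as written the proposal does not establish the statement. For what it is worth, your $P$-template does appear to close these branches: if $\deg^+(b)=3$ and $N^+(b)\subseteq S(u)$, then $S(b)\subseteq N^+(S(u))\subseteq\{w,a,b\}\cup P$ while $w,a,b\notin S(b)$ (each of $w,a$ has the in-neighbour $u\notin N^+(b)$), so $|S(b)|\le|P|\le 2<4$; if neither $N^+(a)$ nor $N^+(b)$ lies in $S(u)$, then $|N^+(a)\cap S(u)|+|N^+(b)\cap S(u)|\le 4$ forces either an immediate contradiction with $|S(u)|\ge 4$ or $|S(u)|=4$ and $P=\emptyset$, after which $S(b)$ lies in the out-neighbourhood of the single vertex of $N^+(b)\setminus S(u)$ and has size at most $3<4$; the mixed cases with only $N^+(a)\subseteq S(u)$ collapse by the same counts. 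But these computations are exactly what the lemma requires, and they are absent from your submission.
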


\begin{proof}
    Let $D$ be an sourceless oriented graph with maximum out-degree $\Delta^+(D)\leq 3$,
   and no vertex $v$ such that 
    \[
    \frac{4}{3}|N^+(v) \cup N^+(S(v))| \geq |S(v)|+1.
    \]
    For contradiction suppose that there is a sink $x$ in $D$.
    Since $D$ is sourceless there exists a vertex $u \in N^-(x)$.
    Let $x,u$ be such vertices.

    If $\deg^+(u) = 1$, then $S(u) = \emptyset$.
    In this case $v = u$ is a vertex we have forbidden, a contradiction.
    So $\deg^+(u) \geq 2$.

    If $\deg^+(u) = 2$, then let $N^+(u) = \{x,w\}$ without loss of generality.
    Then, $S(u) \subseteq N^+(w)$ implying that $|S(u)| \leq 3$.
    So, $|S(u)| + 1 \leq 4$.
    Notice $|S(u)|\geq 2$, otherwise we can set $v = u$.
    Hence, if $|N^+(u) \cup N^+(S(u))| \geq 3$, then 
    \[
    |S(u)| + 1 \leq 4 \leq \frac{4}{3} |N^+(u) \cup N^+(S(u))|
    \]
    a contradiction, since we can set $v = u$. 
    So we suppose $|N^+(u) \cup N^+(S(u))| < 3$ implying $N^+(S(u)) \subseteq N^+(u)$.
    Since $D$ is oriented and $ux,uw \in E(D)$,
    we note that $x \notin S(w)$.
    Given, $S(u) \subseteq N^+(w)$, and $|S(u)|\geq 2$, while $D$ has maximum out-degree $3$,
    this implies $S(w) \leq 3$.
    Moreover, $S(w)> 0$ only if $w$ has out-degree $3$.
    If $S(w) = \emptyset$, then set $v = w$.
    Otherwise, $\deg^+(w) = 3$, and 
    \[
    |S(w)|+1 \leq 4 \leq \frac{4}{3}|N^+(w)|
    \]
    so we can set $v = w$.
    Since both cases lead to a contradiction
    we conclude that $\deg^+(u) = 3$.

    Without loss of generality let $N^+(u) = \{x,w,z\}$,
    and without loss of generality suppose that $|N^+(w)\cap S(u)|\geq |N^+(z)\cap S(u)|$.
    So $S(u) \subseteq N^+(w) \cup N^+(z)$, meaning $|S(u)|\leq 6$.
    Since $\deg^+(u) = 3$, $|S(u)|\geq 4$,
    moreover, if $N^+(S(u)) \not\subseteq N^+(u)$, then $|S(u)|\geq 5$.
    Thus, $|N^+(w)\cap S(u)| \geq 2$ and $|N^+(w)\cap S(u)| = 3$ if $N^+(S(u)) \not\subseteq N^+(u)$.

    Consider the case where $N^+(w)\subseteq S(u)$.
    As before, $x \notin S(w)$, and by the same argument $z \notin S(w)$,
    so $S(w) \cap N^+(u) = \emptyset$.
    Since, $N^+(w)\subseteq S(u)$, we note that $S(w)\subseteq N^+(S(u))$.
    If $\deg^+(w) = 2$, then $|S(u)| = 4$ implying $N^+(S(u)) \subseteq N^+(u)$.
    But this forces $S(w) = \emptyset$ a contradiction since we can take $v = w$,
    so $\deg^+(w) = 3$.
    Since $\deg^+(w) = 3$ we note that $|S(w)|\geq 4$,
    otherwise we can take $v = w$,
    implying that
    $4 \leq |S(w)| \leq |N^+(S(u))\setminus N^+(u)|$ given $S(w) \cap N^+(u) = \emptyset$.
    But this implies 
    \[
    |S(u)| + 1 \leq 7 < \frac{4}{3}(7) \leq \frac{4}{3}|N^+(u) \cup N^+(S(u))|.
    \]
    Since this would let us take $v = u$, we conclude that $N^+(w)\not\subseteq S(u)$.

    Since $N^+(w)\not\subseteq S(u)$ and $|N^+(w) \cap S(u)|\geq 2$,
    we observe that $|N^+(w)\setminus S(u)| = 1$, and $|N^+(w) \cap S(u)|= 2$, and $|S(u)| = 4$.
    As before, $S(w) \cap N^+(u) = \emptyset$.
    This implies $|S(w)| - 3 \leq |N^+(S(u))|$, given the unique vertex in $N^+(w)\setminus S(u)$
    contributes at most $3$ vertices to $S(w)$.
    Since $|N^+(w)\setminus S(u)| = 1$, and $|N^+(w) \cap S(u)|= 2$ we note that 
    $\deg^+(w) = 3$, and as a result $|S(w)|\geq 4$ otherwise we can choose $v = w$.
    Thus, $1\leq |S(w)| - 3 \leq |N^+(S(u))|$ implying that
    \[
    5 = |S(u)| + 1 < \frac{4}{3}(4) \leq \frac{4}{3}|N^+(u) \cup N^+(S(u))|.
    \]
    Since this would let us take $v = u$, we have completed the proof.   
\end{proof}

\begin{lemma}\label{Lemma: out-3, min-1}
   If $D$ is a sourceless oriented graph with maximum out-degree $\Delta^+(D)\leq 3$,
   and no vertex $v$ such that
    \[
    \frac{4}{3}|N^+(v) \cup N^+(S(v))| \geq |S(v)|+1,
    \]
    then $D$ has minimum out-degree $\delta^+(D)\geq 2$.
\end{lemma}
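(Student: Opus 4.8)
The plan is to argue by contradiction. By Lemma~\ref{Lemma: out-3, min-0} the digraph $D$ already has $\delta^+(D)\geq 1$, so suppose toward a contradiction that there is a vertex $u$ with $\deg^+(u)=1$, say $N^+(u)=\{x\}$. The single most useful observation is that, since $\deg^+(u)=1$ and $D$ is sourceless, every vertex $w\in S(u)$ satisfies $N^-(w)=\{x\}$; in particular $S(u)\subseteq N^+(x)$, so $|S(u)|\leq 3$, and since $\delta^+(D)\geq 1$ every $w\in S(u)$ has an out-neighbour, which by orientedness cannot be $x$. The goal is to exhibit a vertex $v$ with $\frac{4}{3}|N^+(v)\cup N^+(S(v))|\geq |S(v)|+1$, contradicting the hypothesis. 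I would organize the argument by the value of $|S(u)|\in\{0,1,2,3\}$.

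The cases $|S(u)|\in\{0,1\}$ are handled by taking $v=u$: if $|S(u)|=0$ then $\frac{4}{3}|N^+(u)|=\frac{4}{3}\geq 1$, and if $|S(u)|=1$ the unique vertex of $S(u)$ contributes an out-neighbour distinct from $x$, so $|N^+(u)\cup N^+(S(u))|\geq 2$ and $\frac{8}{3}\geq 2$. For $|S(u)|\in\{2,3\}$ the first attempt is again $v=u$; a short count shows this succeeds unless the out-neighbourhoods of all vertices of $S(u)$ coincide in a single common vertex $z$. So the real work is the analysis of this rigid ``all arrows point to $z$'' configuration, where I would instead take $v=x$ and split on $\deg^+(x)\in\{2,3\}$.

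In the rigid configuration the constraint $N^-(w)=\{x\}$ for $w\in S(u)$ forces $S(x)$ to be small: any vertex of $S(x)$ must be an out-neighbour of some vertex of $S(u)$, hence lies in $\{z\}\cup N^+(y)$, where $y$ is the (at most one) out-neighbour of $x$ not in $S(u)$. When $|S(u)|=3$ we have $S(u)=N^+(x)$, so $S(x)\subseteq\{z\}$, and $|N^+(x)\cup N^+(S(x))|\geq 4$ follows either from $N^+(z)=\{x\}$ (if $z=u$) or from $N^+(z)$ being disjoint from $N^+(x)$ by orientedness (if $z\neq u$), which gives the contradiction; the case $|S(u)|=2$ with $\deg^+(x)=2$ is analogous, again yielding $S(x)\subseteq\{z\}$. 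The genuinely delicate case is $|S(u)|=2$ with $\deg^+(x)=3$, where $S(x)$ can be as large as $\{z\}\cup N^+(y)$ with $|N^+(y)|=3$ and a naive count makes $v=x$ look insufficient; I expect this to be the main obstacle. It is resolved by observing that for $v=x$ to fail every out-neighbour of $y$ would have to point into $S(u)$, contradicting $N^-(w)=\{x\}$ for $w\in S(u)$, so $N^+(S(x))$ always supplies a vertex outside $N^+(x)$ and $v=x$ works. Since every case produces a forbidden vertex, no such $u$ exists and $\delta^+(D)\geq 2$.
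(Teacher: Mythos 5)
Your argument is correct, but it takes a genuinely different route from the paper's. The paper also fixes an out-degree-$1$ vertex (its $x$, with $N^+(x)=\{y\}$), but then anchors the case analysis on an in-neighbour $u$ of that vertex and splits on $\deg^+(u)\in\{1,2,3\}$, tracking $S(u)$, $S(w)$ and $S(x)$ for the out-neighbours $w,z$ of $u$; this mirrors and explicitly reuses arguments from Lemmas~\ref{Lemma: out-3, min-0} and~\ref{Lemma: out-3, min-2}, at the cost of a long analysis. You instead stay with the out-degree-$1$ vertex $u$ and its unique out-neighbour $x$, exploiting an observation the paper does not use in this form: every $w\in S(u)$ has $N^-(w)=\{x\}$, hence $S(u)\subseteq N^+(x)$ and $|S(u)|\le 3$. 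Your counts check out: since $x\notin N^+(S(u))$ by orientedness, $v=u$ satisfies $\frac{4}{3}|N^+(u)\cup N^+(S(u))|\ge |S(u)|+1$ whenever $|S(u)|\le 1$ or $|N^+(S(u))|\ge 2$, and in the remaining rigid configuration ($N^+(w)=\{z\}$ for all $w\in S(u)$) the switch to $v=x$ works: if $N^+(x)=S(u)$ then $S(x)\subseteq\{z\}$ and $\frac{4}{3}|N^+(x)|\ge 2\ge |S(x)|+1$; if $\deg^+(x)=3$ and $|S(u)|=2$ then $S(x)\subseteq\{z\}\cup N^+(y)$, the case $|S(x)|\le 3$ is immediate from $|N^+(x)|=3$, and $|S(x)|=4$ forces $N^+(y)\subseteq S(x)$, whose vertices have out-edges avoiding $y$ (orientation) and avoiding $S(u)$ (those vertices have in-neighbourhood $\{x\}$), so $|N^+(x)\cup N^+(S(x))|\ge 4$ and $\frac{4}{3}\cdot 4\ge 5$. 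One cosmetic remark: your $|S(u)|=3$ subcase is already trivial, since $|S(x)|\le 1$ and $|N^+(x)|=3$ give $\frac{4}{3}\cdot 3=4\ge |S(x)|+1$, so the extra discussion of why the union has size at least $4$ is unneeded. Overall your decomposition is shorter and more self-contained, while the paper's buys uniformity with the neighbouring lemmas in its escalation to $3$-out-regularity.
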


\begin{proof}
    Let $D$ be an sourceless oriented graph with maximum out-degree $\Delta^+(D)\leq 3$,
   and no vertex $v$ such that 
    \[
    \frac{4}{3}|N^+(v) \cup N^+(S(v))| \geq |S(v)|+1.
    \]
    By Lemma~\ref{Lemma: out-3, min-0} $D$ has minimum out-degree at least $1$.
    For contradiction suppose that there is a vertex $x$ in $D$ with out-degree $1$.
    Since $D$ is sourceless there exists a vertex $u \in N^-(x)$.
    Let $x,u$ be such vertices, and let $\{y\} = N^+(x)$.

    If $\deg^+(u) = 1$, then $S(u) \subseteq \{y\}$.
    Since $D$ has minimum out-degree $1$, $y$ has an out-neighbour $z$.
    Notice $z \neq x$ since $D$ is oriented.
    Regardless of whether $z\neq u$ or $z = u$, it must be the case that $|N^+(v) \cup N^+(S(v))|\geq 2 \geq |S(u)|+1$.
    So we can take $v = u$, a contradiction.

    If $\deg^+(u) = 2$, 
    then let $N^+(u) = \{x,w\}$ without loss of generality.
    Observe that $|S(u)|\leq 4$.
    If $S(u) \subseteq N^+(x)$ or $S(u) \subseteq N^+(w)$, then we reach a contradiction by the same argument used in the proof of 
    Lemma~\ref{Lemma: out-3, min-0}.
    Suppose that $S(u) \not\subseteq N^+(x)$ and $S(u) \not\subseteq N^+(w)$.
    Then, $y \in S(u)$.
    
    Consider the case where $|N^+(S(u))\setminus N^+(u)]| \geq 2$.
    Then, $|S(u)|\geq 5$, otherwise we can let $v = u$.
    But this is a contradiction, so we suppose $|N^+(S(u))\setminus N^+(u)| \leq 1$.

    Since $S(u)$ is a set of sources in $D-N^+[u]$, $S(u)$ is an independent set.
    We conclude that $y \in S(u)$ implies
    \[
    N^+(y) \subseteq \{u,w\} \cup \Big(N^+(S(u))\setminus N^+(u)\Big).
    \]
    Since $x$ has $\deg^+(x) = 1$, $S(x)\subseteq N^+(y)$.
    Notice that since $D$ is oriented,  $uw, ux \in E(D)$ implies $w \notin S(x)$.
    Thus, $|N^+(S(u))\setminus N^+(u)| \leq 1$ implies $|S(x)|\leq 2$, with equality only if $u \in S(x)$.  
    
    If $u \in S(x)$, then $\{x,w\} \subseteq N^+(S(x))$, implying 
    $\frac{4}{3}|N^+(x)\cup N^+(S(x))| \geq 4 > 2\geq |S(x)|$.
    In this case take $v =x$.
    Otherwise, $u \not\in S(x)$ implying that $|S(x)|\leq 1$.
    If $S(x) = \emptyset$, then take $v = x$.
    If $|S(x)| = 1$, then let $q \in S(x)$, since $D$ has minimum out-degree $1$, $\deg^+(q)\geq 1$.
    Since $D$ is oriented and $yq \in E(D)$, $N^+(S(x))\setminus N^+(x) = N^+(q)\neq \emptyset$.
    It follows that 
    \[
    |S(x)|+1 = 2 < \frac{4}{3}(2) \leq \frac{4}{3}|N^+(x)\cup N^+(S(x))|.
    \]
    letting us take $v = x$.
    Since this is a contradiction
    we conclude that $\deg^+(u) = 3$.

    Without loss of generality let $N^+(u) = \{x,w,z\}$,
    and without loss of generality suppose that $|N^+(w)\cap S(u)|\geq |N^+(z)\cap S(u)|$.
    Then $|S(u)|\leq 7$.
    Furthermore,
    since $\deg^+(u) = 3$, $|S(u)|\geq 4$,
    moreover, if $|N^+(S(u))\setminus N^+(u)|\geq 2$, then $|S(u)|\geq 6$.
    Since $|S(u)|\geq 4$ we note that $|N^+(w)\cap S(u)| \geq 2$.
    Additionally, observe that $|N^+(w)\cap S(u)| = 3$ if $|N^+(S(u))\setminus N^+(u)|\geq 2$.

    Consider the case where $N^+(w)\subseteq S(u)$.
    As earlier, $x,z \notin S(w)$, so $S(w) \cap N^+(u) = \emptyset$.
    Since, $N^+(w)\subseteq S(u)$, we note that $S(w)\subseteq N^+(S(u))$.
    If $\deg^+(w) = 2$, then 
    \[
    2 \leq |S(w)| \leq |N^+(S(u))\setminus N^+(u)|
    \]
    otherwise we can choose $v = w$.
    But this is a contradiction, because we have shown $|N^+(S(u))\setminus N^+(u)|\geq 2$ forces 
    $\deg^+(w) = |N^+(w)\cap S(u)| = 3> 2 = \deg^+(w)$.
    So we suppose that $\deg^+(w) = 3$.
    Since $\deg^+(w) = 3$ we note that $|S(w)|\geq 3$,
    otherwise we can take $v = w$,
    implying that
    $3 \leq |S(w)| \leq |N^+(S(u))\setminus N^+(u)|$ given $S(w) \cap N^+(u) = \emptyset$.
    But this implies 
    \[
    |S(u)| + 1 \leq 8 = \frac{4}{3}(6) \leq \frac{4}{3}|N^+(u) \cup N^+(S(u))|.
    \]
    Since this would let us take $v = u$, we conclude that $N^+(w)\not\subseteq S(u)$.

    Since $N^+(w)\not\subseteq S(u)$ and $|N^+(w) \cap S(u)|\geq 2$,
    we observe that $|N^+(w)\setminus S(u)| = 1$, and $|N^+(w) \cap S(u)|= 2$.
    Since $|N^+(w)\cap S(u)|\geq |N^+(z)\cap S(u)|$ this forces $|S(u)|\leq 5$
    with equality only if $y \in S(u)$.
    As before, $S(w) \cap N^+(u) = \emptyset$.
    This implies $|S(w)| - 3 \leq |N^+(S(u))|$, given the unique vertex in $N^+(w)\setminus S(u)$
    contributes at most $3$ vertices to $S(w)$.
    Since $|N^+(w)\setminus S(u)| = 1$, and $|N^+(w) \cap S(u)|= 2$ we note that 
    $\deg^+(w) = 3$, and as a result $|S(w)|\geq 4$ otherwise we can choose $v = w$.
    If $1\leq |S(w)| - 3 \leq |N^+(S(u))|$ implying that if $|S(u)|\leq 4$, then 
    \[
    5 = |S(u)| + 1 < \frac{4}{3}(4) \leq \frac{4}{3}|N^+(u) \cup N^+(S(u))|.
    \]
    Since this would let us take $v = u$, we have conclude that $|S(u)|  = 5$.
    So $y \in S(u)$.

    Since $y \in S(u)$ and $\{y\} = N^+(x)$, we note that $S(x) \subseteq N^+(y) \subseteq N^+(S(u))$.
    As we have seen at several points, in varied contexts, $N^+(u)\cap S(x) = \emptyset$.
    Since $D$ has minimum out-degree $1$ and $D$ is oriented, we observed that
    $S(x) \subseteq N^+(y)$ and $\{y\} = N^+(x)$ implies $N^+(S(x))\setminus N^+(x) \neq \emptyset$.
    Thus, $\frac{4}{3}|N^+(x)\cup N^+(S(x))|> 2$ which forces $|S(x)|\geq 2$ otherwise we may take $v =x$.
    But $|S(x)|\geq 2$ implies that $|N^+(S(u))\setminus N^+(u)|\geq 2$,
    which in turn implies $N^+(w)\subseteq S(u)$.
    This contradicts our earlier conclusion that $N^+(w)\not\subseteq S(u)$,
    thereby completing the proof.
\end{proof}

\begin{lemma}\label{Lemma: out-3, min-2}
   If $D$ is a sourceless oriented graph with maximum out-degree $\Delta^+(D)\leq 3$,
   and no vertex $v$ such that
    \[
    \frac{4}{3}|N^+(v) \cup N^+(S(v))| \geq |S(v)|+1,
    \]
    then $D$ has minimum out-degree $\delta^+(D)\geq 3$.
\end{lemma}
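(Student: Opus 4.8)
The plan is to follow the exact template of Lemmas~\ref{Lemma: out-3, min-0} and \ref{Lemma: out-3, min-1}: assume for contradiction that there is a vertex $x$ with $\deg^+(x) = 2$, and since $D$ is sourceless pick $u \in N^-(x)$. By the two previous lemmas we may assume $\delta^+(D) \geq 2$, so every vertex already has out-degree $2$ or $3$; this eliminates several degenerate sub-cases that had to be handled before and should shorten the bookkeeping. Write $N^+(x) = \{y_1, y_2\}$. The key repeated observations are the same three facts used throughout this section: $S(w)$ for any out-neighbour $w$ of $u$ cannot meet $N^+(u)$ because $D$ is oriented (so an edge $uw$ forbids $w \in S(\cdot)$ along these lines), $S(v)$ is always an independent set, and each vertex of out-degree $\leq 3$ contributes at most $3$ vertices to any $S(\cdot)$.

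First I would dispatch the case $\deg^+(u) = 2$. Here $N^+(u) = \{x, w\}$, so $S(u) \subseteq N^+(x) \cup N^+(w)$ giving $|S(u)| \leq 6$ (and in fact smaller once we note $x \notin S(w)$, $w \notin S(x)$, and independence of $S(u)$). The goal is to show that unless we can already take $v = u$, we are forced into a configuration where $y_1$ or $y_2$ lies in $S(u)$ and then bound $S(x)$ (via $S(x) \subseteq N^+(y_1) \cup N^+(y_2)$, using $\deg^+(x) = 2$) to find that either $v = x$ works or $|N^+(S(u)) \setminus N^+(u)|$ is large enough that $v = u$ works. Then I would handle $\deg^+(u) = 3$, writing $N^+(u) = \{x, w, z\}$ with $|N^+(w) \cap S(u)| \geq |N^+(z) \cap S(u)|$, so $|S(u)| \leq 9$; splitting on whether $N^+(w) \subseteq S(u)$ exactly as in the prior proofs, and in the residual tight case chasing the structure down to $y_1, y_2$ and bounding $S(x)$. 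In each branch the contradiction has the shape $|S(u)| + 1 < \tfrac{4}{3}|N^+(u) \cup N^+(S(u))|$ or the analogous inequality with $v = w$ or $v = x$.

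The main obstacle I anticipate is the same as in Lemma~\ref{Lemma: out-3, min-1} but one notch worse: because $x$ now has two out-neighbours $y_1, y_2$ rather than one, the set $S(x) \subseteq N^+(y_1) \cup N^+(y_2)$ can be as large as $6$, so the ``push the contradiction from $u$ to $x$'' step no longer closes immediately and may need a second level of recursion — bounding $S(y_i)$ in turn, or using that $y_1, y_2 \in S(u)$ simultaneously forces $|N^+(S(u)) \setminus N^+(u)|$ to be large because $N^+(y_1) \cup N^+(y_2)$ must go somewhere outside $N^+(u)$ by independence of $S(u)$. Carefully tracking which vertices of $N^+(u)$, $N^+(w)$, $N^+(z)$, $N^+(y_1)$, $N^+(y_2)$ coincide — while respecting that $D$ is oriented (no digons) and that all these out-neighbourhoods have size $\leq 3$ — is where the calculation will be most delicate, and I would organize it by first maximizing $|S(u)|$ under the constraint ``$v = u$ does not work'', then showing that maximal configuration is itself impossible.

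Once the out-degree-$2$ vertex is ruled out we conclude $\delta^+(D) \geq 3$, and combined with $\Delta^+(D) \leq 3$ this gives that a smallest counterexample to Theorem~\ref{Thm: Out-degree 3} is $3$-out-regular, which is the stated goal of this chain of lemmas; the remaining work (analysing the $3$-out-regular case to finish Theorem~\ref{Thm: Out-degree 3}) presumably comes in a subsequent lemma.
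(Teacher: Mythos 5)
Your plan follows the same template as the paper's proof: assume a vertex $x$ with $\deg^+(x)=2$, take $u \in N^-(x)$, use the previous lemmas to get $\delta^+(D)\geq 2$, split on $\deg^+(u)\in\{2,3\}$, and inside each case split on whether $N^+(x)$ or $N^+(w)$ is contained in $S(u)$, using the standard facts (orientation excludes $N^+(u)$ from $S(w)$ and $S(x)$, $S(\cdot)$ is independent, each out-neighbour contributes at most $3$ vertices). So the approach is the right one. However, as a proof it has a genuine gap: the step you flag as ``the main obstacle I anticipate'' is exactly where the real work lies, and your proposal defers it rather than resolving it. In the $\deg^+(u)=2$ case the paper does not need a large-scale analysis of $N^+(y_1)\cup N^+(y_2)$: it first pins down $|N^+(S(u))\setminus N^+(u)|=1$ (the cases $\geq 3$ and $=2$ are killed by taking $v=u$ or $v=x$, and $=0$ is impossible because $\deg^+(u)=2$ with $\delta^+\geq 2$ and orientation force some out-edge of $S(u)$ to leave $N^+(u)$), then uses $|S(u)|\geq 4$ to force $N^+(x)\subseteq S(u)$ or $N^+(w)\subseteq S(u)$, and for that vertex $\bar{x}$ gets $S(\bar{x})\subseteq N^+(S(u))\setminus N^+(u)$ of size $\leq 1$ while $\deg^+(\bar{x})\geq 2$, so $v=\bar{x}$ works --- a contradiction. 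Your sketch of this case (``$y_1$ or $y_2$ lies in $S(u)$, then bound $S(x)$'') does not reach this configuration and leaves the counting open.

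In the $\deg^+(u)=3$ case the closing move is also more specific than your guess. The paper first eliminates $N^+(x)\subseteq S(u)$ and $N^+(w)\subseteq S(u)$, then squeezes the surviving configuration to $|S(u)|=5$, $|N^+(S(u))\setminus N^+(u)|=1$, with exactly one of $x$'s out-neighbours (say $y$) in $S(u)$ and the other ($t$) not --- so your suggested tight case ``$y_1,y_2\in S(u)$ simultaneously'' is not the configuration that actually survives. From there it bounds $S(x)$ to force $|S(x)|=4$ and $N^+(t)\subseteq S(x)$ with $\deg^+(t)=3$, and the contradiction is the second-level recursion you anticipated but did not execute: $|S(t)|\geq 4$ (else $v=t$ works) while $S(t)\subseteq N^+(S(x))\setminus N^+(x)$, which has size $1$. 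Since every branch of the lemma turns on these exact cardinality squeezes, a plan that stops at ``organize the case analysis and maximize $|S(u)|$'' has not yet proved the statement; the delicate bookkeeping is the proof.
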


\begin{proof}
    Let $D$ be an sourceless oriented graph with maximum out-degree $\Delta^+(D)\leq 3$,
   and no vertex $v$ such that 
    \[
    \frac{4}{3}|N^+(v) \cup N^+(S(v))| \geq |S(v)|+1.
    \]
    By Lemma~\ref{Lemma: out-3, min-1} $D$ has minimum out-degree at least $2$.
    For contradiction suppose that there is a vertex $x$ in $D$ with out-degree $2$.
    Since $D$ is sourceless there exists a vertex $u \in N^-(x)$.
    Let $x,u$ be such vertices, and let $\{y,t\} = N^+(x)$.

    If $\deg^+(u) = 2$, 
    then let $N^+(u) = \{x,w\}$ without loss of generality.
    Observe that $|S(u)|\leq 5$.
    If $S(u) \subseteq N^+(x)$ or $S(u) \subseteq N^+(w)$, then we reach a contradiction by the same argument used in the proof of 
    Lemma~\ref{Lemma: out-3, min-0}.
    Suppose that $S(u) \not\subseteq N^+(x)$ and $S(u) \not\subseteq N^+(w)$.
    Without loss of generality suppose $y \in S(u)$.

    Consider the case where $|N^+(S(u))\setminus N^+(u)]| \geq 3$.
    Then, $|S(u)|\geq 6$, otherwise we can let $v = u$.
    But this is a contradiction, so we suppose $|N^+(S(u))\setminus N^+(u)| \leq 2$.

    Similarly if $|N^+(S(u))\setminus N^+(u)]| = 2$, then $|S(u)| = 5$ so $t \in S(u)$.
    It follows that $S(x)\subseteq N^+(S(u))$.
    As in earlier lemmas $w \notin S(x)$, hence, $|S(x)|\leq 2$.
    Given $\deg^+(x) = 2$ we note that $|S(x)| \geq 2$ otherwise we can take $v =x$, so $|S(x)| = 2$.
    Again since $\deg^+(x) = 2$, and the fact that $D$ is oriented with minimum out-degree $2$, while $S(x)$ is an independent set,
    we observe that $N^+(S(x))\setminus N^+(x) \neq \emptyset$.
    Thus, 
    \[
    |S(x)|+1 = 3 < \frac{4}{3}(3)\leq \frac{4}{3}|N^+(x)\cup N^+(S(x))|
    \]
    letting us take $v = x$, a contradiction.
    We conclude $|N^+(S(u))\setminus N^+(u)]| \leq 1$.
    By the same argument as $x$, $u$ having out-degree $2$ implies $N^+(S(u))\setminus N^+(u) \neq \emptyset$,
    thus we conclude $|N^+(S(u))\setminus N^+(u)]| = 1$.

    Since $|N^+(S(u))\setminus N^+(u)]| = 1$ we note that $|S(u)|\geq 4$, otherwise we can take $v = u$.
    Hence, $N^+(x) \subseteq S(u)$ or $N^+(w) \subseteq S(u)$.
    Let $\Bar{x}$ be this vertex. Then as should be familiar $S(\Bar{x}) \cap N^+(u) = \emptyset$.
    Since $N^+(\Bar{x}) \subseteq S(u)$, we note $S(\Bar{x}) \subseteq N^+(S(u))$,
    so $|S(\Bar{x})|\leq 1$.
    But $\deg^+(\Bar{x})\geq 2$ since $D$ has minimum out-degree $2$.
    Thus, we can take $v = \Bar{x}$ a contradiction.
    We conclude that $\deg^+(u) = 3$.

    Without loss of generality let $N^+(u) = \{x,w,z\}$,
    and without loss of generality suppose that $|N^+(w)\cap S(u)|\geq |N^+(z)\cap S(u)|$.
    Then $|S(u)|\leq 8$.
    If $|N^+(S(u))\setminus N^+(u)|\geq 4$, then
    \[
    |S(u)|+1 \leq 9 < \frac{4}{3}(7) \leq \frac{4}{3}|N^+(u) \cup N^+(S(u))|
    \]
    letting us take $v = u$, so $|N^+(S(u))\setminus N^+(u)|\leq 3$.
    Consider the case where $N^+(x)\subseteq S(u)$.
    Then,  $S(x) \subseteq N^+(S(u))\setminus N^+(u)$ implying $|S(x)|\leq 3$.
    Since $x$ has degree $2$, and $D$ has minimum degree $2$, $N^+(S(x))\setminus N^+(x) \neq \emptyset$.
    Hence, 
    $|S(x)|+1 \leq 4 \leq \frac{4}{3}(3) = \frac{4}{3}|N^+(x) \cup N^+(S(x))|$
    allowing us to take $v = x$ a contradiction.
    We conclude $N^+(x)\subseteq S(u)$, which implies $|S(u)|\leq 7$.
    Since $|S(u)|\leq 7$ we note that $|N^+(S(u))\setminus N^+(u)|= 3$ implies 
    \[
    |S(u)|+1 \leq 8 = \frac{4}{3}(6) \leq \frac{4}{3}|N^+(u) \cup N^+(S(u))|
    \]
    letting us take $v = u$, so $|N^+(S(u))\setminus N^+(u)|\leq 2$.
    Finally, observe that since $N^+(x)\not\subset S(u)$, 
    if $|N^+(S(u))\setminus N^+(u)|= 2$, implies  $|N^+(w)\cap S(u)| = 3$.

    Consider the case where $N^+(w)\subseteq S(u)$.
    As earlier, $x,z \notin S(w)$, so $S(w) \cap N^+(u) = \emptyset$.
    Since, $N^+(w)\subseteq S(u)$, we note that $S(w)\subseteq N^+(S(u))$.
    If $\deg^+(w) = 2$, then 
    \[
    2 \leq |S(w)| \leq |N^+(S(u))\setminus N^+(u)|
    \]
    otherwise we can choose $v = w$.
    But this is a contradiction, because we have shown $|N^+(S(u))\setminus N^+(u)|\geq 2$ forces 
    $\deg^+(w) = |N^+(w)\cap S(u)| = 3> 2 = \deg^+(w)$.
    So we suppose that $\deg^+(w) = 3$.
    As before, $S(w)\cap N^+(u) = \emptyset$ and $S(w) \subseteq N^+(S(u))$.
    Since $\deg^+(w) = 3$ we note that $|S(w)|\geq 4$,
    otherwise we can take $v = w$,
    implying that
    $4 \leq |S(w)| \leq |N^+(S(u))\setminus N^+(u)|$ given $S(w) \cap N^+(u) = \emptyset$.
    But this implies 
    \[
    |S(u)| + 1 \leq 8 < \frac{4}{3}(7) \leq \frac{4}{3}|N^+(u) \cup N^+(S(u))|.
    \]
    Since this would let us take $v = u$, we conclude that $N^+(w)\not\subseteq S(u)$.

    Since, $N^+(x)\not\subseteq S(u)$, and 
    $|N^+(w)\cap S(u)|\geq |N^+(z)\cap S(u)|$ we note that if $|N^+(w)\cap S(u)|\leq 1$, then $|S(u)|\leq 3$.
    This would allow us to take $v = u$ since $\deg^+(u) = 3$,
    so we suppose $|N^+(w)\cap S(u)|\geq 2$.
    From here $N^+(w)\not\subseteq S(u)$ implies that $\deg^+(w) = 3$ and $|N^+(w)\cap S(u)|= 2$.
    Now, $|N^+(w)\cap S(u)|= 2$, $N^+(x)\not\subseteq S(u)$, and 
    $|N^+(w)\cap S(u)|\geq |N^+(z)\cap S(u)|$ implies $|S(u)|\leq 5$.

    As before, $S(w) \cap N^+(u) = \emptyset$.
    This implies $|S(w)| - 3 \leq |N^+(S(u))|$, given the unique vertex in $N^+(w)\setminus S(u)$
    contributes at most $3$ vertices to $S(w)$.
    Since $\deg^+(w) = 3$ we note that $|S(w)|\geq 4$,
    otherwise we can take $v = w$,
    implying that $1 \leq |S(w)| - 3 \leq |N^+(S(u))\setminus N^+(u)|$.
    If $|S(u)|\leq 4$, then 
    \[
    5 = |S(u)| + 1 < \frac{4}{3}(4) \leq \frac{4}{3}|N^+(u) \cup N^+(S(u))|.
    \]
    allowing us to take $v = u$.
    Hence, we conclude that $|S(u)| = 5$ which forces $N^+(x)\cap S(u) \neq \emptyset$.
    Since $|S(u)|\leq 5$ we note that $|N^+(S(u))\setminus N^+(u)| = 1$, otherwise we can take $v = u$.
    Without loss of generality suppose that $y \in S(u)$ and $t\notin S(u)$.

    As before, $S(x) \cap N^+(u) = \emptyset$.
    This implies $|S(x)| - 3 \leq |N^+(S(u))\setminus N^+(u)| = 1$, given the unique vertex in $N^+(x)\setminus S(u)$, so $|S(x)|\leq 4$.
    Since $\deg^+(x) = 2$ and $D$ has minimum out-degree $2$, we note that $|N^+(S(x))\setminus N^+(x)|\geq 1$.
    It follows that $|S(x)| \geq 4$, implying $|S(x)|= 4$, which forces
    $|N^+(S(x))\setminus N^+(x)|= 1$ and $|N^+(t) \cap S(x)| = 3$.
    Since $|N^+(t) \cap S(x)| = 3$, $N^+(t) \subseteq S(x)$ and $\deg^+(t) = 3$.
    But $\deg^+(t) = 3$ implies $|S(t)|\geq 4$ while $N^+(t) \subseteq S(x)$ implies
    $S(t) \subseteq N^+(S(x))\setminus N^+(x)$.
    This implies $4 \leq |S(t)| \leq |N^+(S(x))\setminus N^+(x)| =1$ a contradiction.
    This completes the proof.
\end{proof}

We are now prepared to prove Theorem~\ref{Thm: Out-degree 3}.
To assist the reader we provide Figure~\ref{fig:3-reg}.
This figure is also illustrative of ideas used earlier in the section.

\begin{figure}[H]
\centering
\scalebox{0.9875}{
\begin{subfigure}{0.45\textwidth}
    \includegraphics[scale = 1.0]{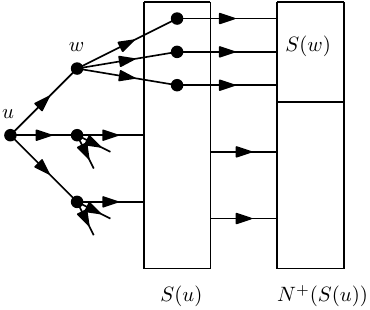}
    \caption{}
    \label{fig:3-ref contained}
\end{subfigure}
\begin{subfigure}{0.45\textwidth}
    \includegraphics[scale = 1.0]{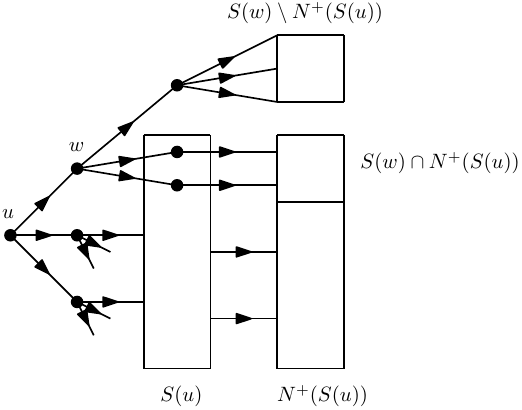}
    \caption{}
    \label{fig:3-ref NOTcontained}
\end{subfigure}
}
    \caption{Two figures which display the key cases covered to prove Theorem~\ref{Thm: Out-degree 3}.}
    \label{fig:3-reg}
\end{figure}

\begin{proof}[Proof of Theorem~\ref{Thm: Out-degree 3}]
    We will begin by proving that all sourceless digraphs with out-degree at most $3$ have a quasi-kernel of order at most
    $\frac{4n}{7}$, then we will prove that such a quasi-kernel can always be found in $O(n^2)$ time.
    Suppose for contradiction that the first claim is false.
    
    Let $D$ be a smallest sourceless digraph with maximum out-degree $\Delta^+ \leq 3$
    and no quasi-kernel of order at most $\frac{4n}{7}$.
    Without loss of generality $D$ is an oriented digraph.
    Then, Corollary~\ref{Coro: Smallest} implies there is no vertex $v$ in $D$ such
    that 
    \[
    \frac{4}{3}|N^+(v) \cup N^+(S(v))| \geq |S(v)|+1.
    \]
    Then by Lemma~\ref{Lemma: out-3, min-2} $D$ has minimum out-degree $\delta^+ \geq 3$,
    implying $D$ is $3$-out-regular.

    Let $u$ be a fixed in $D$ that minimizes $|S(u)|$,
    and let $N^+(u) = \{x,w,z\}$.
    Then, $|S(u)|\leq 9$.
    Without loss of generality suppose $|N^+(w)\cap S(u)|\geq |N^+(z)\cap S(u)|\geq |N^+(x)\cap S(u)|$.
    Observe that $|N^+(S(u))\setminus N^+(u)| \geq 5$ implies
    that
    \[
    |S(u)|+1 \leq 10 < \frac{4}{3}(8) \leq \frac{4}{3}|N^+(u)\cup N^+(S(u))|
    \]
    a contradiction.
    So $|N^+(S(u))\setminus N^+(u)| \leq 4$.

    Consider the case where $N^+(w) \subseteq S(u)$.
    See Figure~\ref{fig:3-ref contained} for an illustration of this case.
    Since $N^+(w) \subseteq S(u)$, we note that $S(w) \subseteq N^+(S(u))\setminus N^+(u)$.
    Given $D$ is $3$-out-regular and oriented $N^+(S(w))\setminus N^+(w) \neq \emptyset$.
    Hence, 
    $\frac{4}{3}|N^+(w)\cup N^+(S(w))|\geq \frac{4}{3}(4)>5$
    which implies $|S(w)|\geq 5$.
    Thus, 
    $N^+(w) \subseteq S(u)$ implies $|N^+(S(u))\setminus N^+(u)| \geq 5$
    a contradiction.
    So $N^+(w) \not\subseteq S(u)$
    implying that $|S(u)|\leq 6$.
    See Figure~\ref{fig:3-ref NOTcontained} for an illustration of this case.

    Since $D$ is $3$-out-regular and oriented $N^+(S(u))\setminus N^+(u) \neq \emptyset$.
    This forces $|S(u)|\geq 5$.
    By our choice of $w$ and the fact that $N^+(w) \not\subseteq S(u)$
    we note that $|N^+(w)\cap S(u)| = 2$
    Hence, recalling that $|S(w)|\geq 5$,
    $
    2 \leq |S(w)|-3 \leq |N^+(S(u))\setminus N^+(u)|.
    $
    Notice that this forces 
    \[
    6 < \frac{4}{3}(5) \leq \frac{4}{3}|N^+(u)\cup N^+(S(u))| < |S(u)|+1
    \]
    so $|S(u)| = 6$.
    By our choice of $u$, $|S(w)|\geq |S(u)| = 6$, 
    so $|S(w)|\geq 6$ in turn implies that 
    $
    3 \leq |S(w)|-3 \leq |N^+(S(u))\setminus N^+(u)|
    $
    forcing
    \[
    7 = |S(u)|+ 1< \frac{4}{3}(6) \leq \frac{4}{3}|N^+(u)\cup N^+(S(u))| < |S(u)|+1
    \]
    a contradiction.
    We conclude that our assumption 
    $D$ is a smallest sourceless digraph with maximum out-degree $\Delta^+ \leq 3$
    and no quasi-kernel of order at most $\frac{4n}{7}$ is false.
    Thus, no such counterexample exists.
\end{proof}

\section{Restricting Short Cycles}

In this section we consider
digraphs with restricted short cycles.
In particular we will prove Theorem~\ref{Thm: C_3,C_4,C_6}.
We require several lemmas and the introduction of some new notation for this proof.
Given a digraph $D$ and a vertex subset $X \subseteq V(G)$, we define $T(X) \subseteq S(X)$ to be the set of isolated vertices in $D-N^+[X]$.
For a vertex $v$, we let denote $T(\{v\})$ by $T(v)$.

\begin{lemma}\label{Lemma: no short-cycle S-struct}
    Let $D$ be a sourceless oriented graph with no $C_4^{\Uparrow}$ as a subgraph.
    For all vertices $v \in V(D)$, if $u \in S(v)$, then $\deg^-(u) = 1$.
\end{lemma}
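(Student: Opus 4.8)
The plan is to unfold the definition of $S(v)$ and then manufacture a forbidden $C_4^{\Uparrow}$ whenever $u$ has two in-neighbours.

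First I would record the two elementary facts following from $u \in S(v)$. By definition $S(v)$ is the set of sources of $D-N^+[v]$, so $u$ has in-degree $0$ in $D-N^+[v]$; hence every in-neighbour of $u$ in $D$ lies in $N^+[v] = \{v\}\cup N^+(v)$. Next, $u \notin N^+[v]$ gives $u \neq v$, and it also gives $u \notin N^+(v)$, which — since $D$ is an oriented graph — rules out the edge $v \to u$, i.e.\ $v \notin N^-(u)$. (This is the one point worth spelling out: if $v$ were an in-neighbour of $u$, then $u$ would be an out-neighbour of $v$ and could not belong to $S(v)$.) Combining these, $N^-_D(u) \subseteq N^+(v)$. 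Finally, since $D$ is sourceless, $u$ is not a source, so $\deg^-(u) \geq 1$; it remains to exclude $\deg^-(u) \geq 2$.

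For the main step I would argue by contradiction: suppose $\deg^-(u) \geq 2$ and pick distinct $a, b \in N^-(u)$. By the previous paragraph $a, b \in N^+(v)$, so $(v,a),(v,b),(a,u),(b,u) \in E(D)$. I would then check that $v, a, u, b$ are pairwise distinct: $a \neq b$ by choice; $a, b \neq v$ and $a, b \neq u$ because $D$ has no loops; and $u \neq v$ as noted above. Thus these four vertices together with the four listed edges span a copy of $C_4^{\Uparrow}$ (the orientation of $C_4$ formed by two internally disjoint directed $2$-paths from $v$ to $u$, as in Figure~\ref{fig:small digraphs}) as a subgraph of $D$, contradicting the hypothesis. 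Hence $\deg^-(u) = 1$.

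I don't anticipate a genuine obstacle here; the whole content is the observation $N^-(u)\subseteq N^+(v)$ plus a distinctness check. The only care needed is to confirm that the configuration exhibited is exactly the $C_4^{\Uparrow}$ of the figure and that the "oriented" hypothesis and $u\notin N^+[v]$ are what guarantee the four vertices are distinct.
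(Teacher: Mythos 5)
Your proof is correct and follows essentially the same route as the paper's: observe that $u \in S(v)$ forces $N^-(u) \subseteq N^+(v)$, so two distinct in-neighbours of $u$ together with $v$ and $u$ would span a forbidden $C_4^{\Uparrow}$, while sourcelessness gives $\deg^-(u) \geq 1$. Your added distinctness check (and the explicit remark that $v \notin N^-(u)$) only makes explicit what the paper leaves implicit.
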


\begin{proof}
    Let $D$ be a sourceless oriented graph with no $\Vec{C}_3, \Vec{C}^{-}_4,C_4^{\Uparrow},$ or $C_6^{\Uparrow}$ as a subgraph.
    Let $v \in V(D)$ be a fixed but arbitrary vertex and let $u \in S(v)$.
    For contradiction suppose $\deg^-(u)\geq 2$.
    Since $u \in S(v)$, $N^-(u) \subseteq N^+(v)$.
    Hence, $\deg^-(u)\geq 2$ implies there exists vertex $w,z \in N^-(u)\cap N^+(v)$.
    Notice this implies $\{u,v,w,z\}$ contains a copy of $C_4^{\Uparrow}$, a contradiction.
\end{proof}

\begin{lemma}\label{Lemma: no short-cycle S-neighbourhoods}
    Let $D$ be a sourceless oriented graph with no $\Vec{C}_3, \Vec{C}^{-}_4,C_4^{\Uparrow},$ or $C_6^{\Uparrow}$ as a subgraph.
    For all vertices $v \in V(D)$, if $u,w \in S(v)$, then $N^+(u)\cap N^+(w) = \emptyset$ and $N^+(u)\cup N^+(w) \subseteq N^+(S(v))\setminus N^+[v]$.
\end{lemma}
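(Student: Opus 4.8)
The plan is to prove the two claims about $N^+(u)$ and $N^+(w)$ for distinct $u,w \in S(v)$ by exploiting the fact that both $u$ and $w$ have $v$ as their unique in-neighbour (by Lemma~\ref{Lemma: no short-cycle S-struct}, $\deg^-(u)=\deg^-(w)=1$, and since $u,w\in S(v)$ we have $N^-(u)=N^-(w)=\{v\}$). First I would establish the second containment $N^+(u)\cup N^+(w)\subseteq N^+(S(v))\setminus N^+[v]$. That $N^+(u)\cup N^+(w)\subseteq N^+(S(v))$ is immediate since $u,w\in S(v)$. To see the out-neighbours avoid $N^+[v]$: if some $p\in N^+(u)$ satisfied $p=v$, then $vu,uv\in E(D)$ contradicts $D$ being oriented; if $p\in N^+(v)$, then $v\to p$, $v\to u$, $u\to p$ forms a $\Vec{C}_3$ (the directed triangle), which is forbidden. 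The same argument applies to $N^+(w)$, giving $N^+(u)\cup N^+(w)\subseteq N^+(S(v))\setminus N^+[v]$.

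Next I would prove the disjointness $N^+(u)\cap N^+(w)=\emptyset$. Suppose for contradiction there is a vertex $p\in N^+(u)\cap N^+(w)$. Then $v\to u\to p$ and $v\to w\to p$ are two directed paths of length $2$ from $v$ to $p$, sharing only their endpoints (since $u\neq w$). The vertex set $\{v,u,w,p\}$ with edges $vu, vw, up, wp$ is precisely a copy of $C_4^{\Uparrow}$ — the orientation of $C_4$ with two sources and two sinks ($v$ and $p$ being the vertices of in/out-degree pattern matching the $\Uparrow$ orientation). This is forbidden by hypothesis, so no such $p$ exists.

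The main subtlety — and the step I would be most careful about — is correctly matching the orientation names $\Vec{C}_3$, $\Vec{C}_4^-$, $C_4^{\Uparrow}$, $C_6^{\Uparrow}$ to the actual edge configurations arising, since the proof hinges entirely on recognizing that the configuration $v\to u\to p$, $v\to w\to p$ is the forbidden $C_4^{\Uparrow}$ rather than some other orientation of $C_4$. I would double-check against Figure~\ref{fig:small digraphs} that $C_4^{\Uparrow}$ is indeed the "two opposite sources, two opposite sinks" orientation. I do not expect to need the $\Vec{C}_4^-$ or $C_6^{\Uparrow}$ hypotheses for this particular lemma — only $D$ oriented, $\Vec{C}_3$-free, and $C_4^{\Uparrow}$-free are used — though stating the full hypothesis is harmless and keeps the lemma uniform with its neighbours in the section.
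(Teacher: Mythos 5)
Your proof rests on a false premise: for $u \in S(v)$ you assert $N^-(u)=\{v\}$. But $S(v)$ is the set of sources of $D-N^+[v]$, so $u \notin N^+[v]$; in particular $v$ cannot be an in-neighbour of $u$ (if $v\to u$ were an edge then $u \in N^+(v)$ and $u$ would have been deleted). What is true is that every in-neighbour of $u$ lies in $N^+(v)$, and together with Lemma~\ref{Lemma: no short-cycle S-struct} this gives a unique in-neighbour $x\in N^+(v)$, so $\dist(v,u)=2$, not $1$. Consequently the edges $v\to u$ and $v\to w$ that you use throughout do not exist, and each of your three steps collapses: the exclusion of $p=v$ cannot be argued from orientedness, since $vu$ is not an edge; the configuration you invoke for $p\in N^+(v)$, namely $v\to u$, $v\to p$, $u\to p$, is (even granting the phantom edge) a transitive triangle, not the directed cycle $\Vec{C}_3$, and transitive triangles are not forbidden; and the set $\{v,u,w,p\}$ with edges $vu,vw,up,wp$ is not a subgraph of $D$, so no $C_4^{\Uparrow}$ arises there.

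The correct argument must route through the in-neighbours $x\in N^-(u)\cap N^+(v)$ and $y\in N^-(w)\cap N^+(v)$, which exist because $D$ is sourceless. Then a common out-neighbour $z$ of $u$ and $w$ with $z=v$ closes the directed triangle $v\to x\to u\to v$ (this is where $\Vec{C}_3$ is really used); $z\in N^+(v)$ yields a copy of $\Vec{C}^{-}_4$ on $\{v,x,u,z\}$; and $z\notin N^+[v]$ yields a copy of $C_4^{\Uparrow}$ on $\{x,u,w,z\}$ when $x=y$, but a copy of $C_6^{\Uparrow}$ on $\{v,x,y,u,w,z\}$ when $x\neq y$. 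Your closing remark that the $\Vec{C}^{-}_4$- and $C_6^{\Uparrow}$-freeness hypotheses are unnecessary is a symptom of the same misreading: all four forbidden subgraphs are needed, and the case $x\neq y$, which forces the $C_6^{\Uparrow}$ hypothesis, has no analogue in your write-up.
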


\begin{proof}
    Let $D$ be a sourceless oriented graph with no $\Vec{C}_3, \Vec{C}^{-}_4,C_4^{\Uparrow},$ or $C_6^{\Uparrow}$ as a subgraph.
    Let $v \in V(D)$ be a fixed but arbitrary vertex and let $u,w \in S(v)$.
    For contradiction suppose $N^+(u)\cap N^+(w) \neq \emptyset$.
    Then there exists a vertex $z \in N^+(u)\cap N^+(w)$.
    Moreover, since $u,w \in S(v)$, there exists not-necessary distinct vertices $x \in N^-(u)\cap N^+(v)$
    and $y \in N^-(w)\cap N^+(v)$.
    Since $D$ is oriented $z \notin \{x,y\}$
    
    If $z  = v$, then $vxu$ is a copy $\Vec{C}_3$, a contradiction.
    So $z \neq v$.
    Suppose $z \in N^+(v)$.
    In this case $\{u,v,x,z\}$ contains a copy of $\Vec{C}^{-}_4$, a contradiction.
    So $z \notin N^+(v)$.
    It follows $N^+(u)\cup N^+(w) \subseteq N^+(S(v))\setminus N^+(v)$.

    If $x = y$, then $\{x,u,w,z\}$ contains a copy of $C_4^{\Uparrow}$,
    a contradiction. So suppose $x\neq y$.
    But if $x\neq y$, then $\{v,x,y,u,w,z\}$ contains a copy of $C_6^{\Uparrow}$, since these are all distinct vertices.
    As this is also a contradiction, the proof is complete.
\end{proof}

\begin{lemma}\label{Lemma: T-set}
    Let $D$ be an oriented graph with no $\Vec{C}_3, \Vec{C}^{-}_4,C_4^{\Uparrow},$ or $C_6^{\Uparrow}$ as a subgraph.
    For all vertex $v \in V(D)$,
    \[
    |N^+(S(v))\setminus N^+(v)| \geq |S(v)\setminus T(v)|.
    \]
\end{lemma}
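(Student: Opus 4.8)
The plan is to exhibit an injection from $S(v)\setminus T(v)$ into $N^+(S(v))\setminus N^+(v)$, which immediately yields the claimed inequality. First I would unpack the relevant definitions: a vertex $u\in S(v)$ is a source of $D-N^+[v]$, so every in-neighbour of $u$ in $D$ lies in $N^+(v)$, and $u\in T(v)$ means $u$ is moreover isolated in $D-N^+[v]$, i.e.\ also a sink there. Hence for each $u\in S(v)\setminus T(v)$ the vertex $u$ has at least one out-neighbour that does not lie in $N^+[v]$; fix one such out-neighbour and call it $f(u)$. Since $u\in S(v)$ we have $f(u)\in N^+(S(v))$, and since $f(u)\notin N^+[v]\supseteq N^+(v)$ we get $f(u)\in N^+(S(v))\setminus N^+(v)$. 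Thus $f$ is a well-defined map $S(v)\setminus T(v)\to N^+(S(v))\setminus N^+(v)$.

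Next I would check injectivity using Lemma~\ref{Lemma: no short-cycle S-neighbourhoods}. Suppose $f(u)=f(w)$ for distinct $u,w\in S(v)\setminus T(v)\subseteq S(v)$. Then this common value lies in $N^+(u)\cap N^+(w)$, contradicting the assertion of Lemma~\ref{Lemma: no short-cycle S-neighbourhoods} that $N^+(u)\cap N^+(w)=\emptyset$ for any two distinct members of $S(v)$. So $f$ is injective, and therefore $|N^+(S(v))\setminus N^+(v)|\ge|S(v)\setminus T(v)|$, completing the argument.

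There is no deep obstacle here; the statement is essentially a corollary of Lemma~\ref{Lemma: no short-cycle S-neighbourhoods}, and the bulk of the work is just bookkeeping about what ``source of $D-N^+[v]$'' and ``isolated in $D-N^+[v]$'' mean. The one point I would be careful about is a hypothesis mismatch: Lemma~\ref{Lemma: no short-cycle S-neighbourhoods} is stated for \emph{sourceless} oriented graphs, whereas Lemma~\ref{Lemma: T-set} only requires forbidding the four short-cycle orientations. I would verify (as is the case) that the disjointness conclusion $N^+(u)\cap N^+(w)=\emptyset$ in the proof of Lemma~\ref{Lemma: no short-cycle S-neighbourhoods} never uses sourcelessness — it uses only that $u,w\in S(v)$ together with the forbidden subgraphs — so it transfers verbatim. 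Alternatively, one could re-derive that single disjointness fact inline: a common out-neighbour $z$ of distinct $u,w\in S(v)$, together with in-neighbours $x\in N^-(u)\cap N^+(v)$ and $y\in N^-(w)\cap N^+(v)$, forces a copy of one of the four forbidden orientations, which is precisely the case analysis in the proof of Lemma~\ref{Lemma: no short-cycle S-neighbourhoods}.
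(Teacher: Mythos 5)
Your core argument --- send each $u\in S(v)\setminus T(v)$ to an out-neighbour of $u$ lying outside $N^+[v]$ (such a vertex exists because $u$ is a non-isolated source of $D-N^+[v]$), and get injectivity from the disjointness $N^+(u)\cap N^+(w)=\emptyset$ of Lemma~\ref{Lemma: no short-cycle S-neighbourhoods} --- is essentially the paper's proof, phrased as an explicit injection rather than as a count of pairwise disjoint nonempty out-neighbourhoods inside $N^+(S(v))\setminus N^+[v]$. That part is correct.

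Where you go wrong is the bridging claim about the hypothesis mismatch. You assert, ``as is the case,'' that the disjointness conclusion of Lemma~\ref{Lemma: no short-cycle S-neighbourhoods} does not use sourcelessness. It does: its proof starts by picking $x\in N^-(u)\cap N^+(v)$ and $y\in N^-(w)\cap N^+(v)$, and these exist only because $u$ and $w$ have in-neighbours at all --- membership in $S(v)$ forces $N^-(u)\subseteq N^+(v)$ but not $N^-(u)\neq\emptyset$, so sourcelessness is exactly what is being used. Without it, both the disjointness and the inequality of Lemma~\ref{Lemma: T-set} can fail: take $V(D)=\{v,a,u,w,z\}$ with edges $(v,a),(u,z),(w,z)$; this digraph is acyclic, hence contains none of the forbidden orientations, yet $S(v)=\{u,w\}$, $T(v)=\emptyset$, and $N^+(S(v))\setminus N^+(v)=\{z\}$, so $|N^+(S(v))\setminus N^+(v)|=1<2=|S(v)\setminus T(v)|$. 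So the statement as printed is missing the sourceless hypothesis (the paper's own proof silently assumes it, and the lemma is only ever applied to sourceless digraphs); the correct repair is to add that hypothesis, not to claim the auxiliary lemma transfers verbatim. With sourcelessness restored, your injection argument is sound and coincides with the paper's.
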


\begin{proof}
    Let $D$ be a sourceless oriented graph with no $\Vec{C}_3, \Vec{C}^{-}_4,C_4^{\Uparrow},$ or $C_6^{\Uparrow}$ as a subgraph.
    Let $v \in V(D)$ be a fixed but arbitrary vertex.
    Since $D$ is sourceless, there are no isolated vertices in $D$.
    Hence, any vertex $u \in T(v)$ must have $N^-(u)\cup N^+(u) \subseteq N^+(v)$.
    It follows that $T(v) \subseteq S(v)$.
    
    By Lemma~\ref{Lemma: no short-cycle S-neighbourhoods}
    if $u,w \in S(v)$ are distinct vertices, then $N^+(u)\cap N^+(w) = \emptyset$ and $N^+(u)\cup N^+(w) \subseteq N^+(S(v))\setminus N^+[v]$.
    Since $N^+(u)\cup N^+(w) \subseteq N^+(S(v))\setminus N^+[v]$,
    if $\deg^+(u)\geq 1$ (or $\deg^+(w)\geq 1$), then $u$ (or $w$) is not an isolated vertex in $D-N^+[v]$.
    Hence, $u \in S(v)$ is in $T(v)$ if and only if $\deg^+(u) = 0$.
    Thus, $|N^+(S(v))\setminus N^+(v)| \geq |S(v)\setminus T(v)|$
    completing the proof.
\end{proof}

With these structural lemmas in hand we are prepared to prove Theorem~\ref{Thm: C_3,C_4,C_6}.

\begin{proof}[Proof of Theorem~\ref{Thm: C_3,C_4,C_6}]
    Let $d\geq 2$ be an integer,
    let $D$ a sourceless digraph with maximum out-degree $\Delta^+(D) \leq d$, and no $\Vec{C}_3, \Vec{C}^{-}_4,C_4^{\Uparrow},$ or $C_6^{\Uparrow}$ as a subgraph.
    Suppose, $D$ is a smallest counterexample.
    Without loss of generality $D$ is oriented.
    Then Corollary~\ref{Coro: Smallest}
    implies that 
    for all vertices $v$ in $D$,
    \[
    \frac{d^2+4}{4d}|N^+(v) \cup N^+(S(v))| < |S(v)|+1.
    \]
    For each vertex $v$, let $k_v  = |\{u \in N^+(v): \deg^-(u) = 1, \deg^+(u) = 0\}|$.

    Suppose $v$ is a vertex such that $k_v = \max_{u\in V(D)} k_u$ and $\deg^+(v)\geq 1$.
    If $\max_{u\in V(D)} k_u > 0$ trivially such a vertex exists.
    If $\max_{u\in V(D)} k_u = 0$, then $k_u = 0$ for all vertices, and the fact that $D$ is sourceless implies
    there exists a vertex in $D$ with positive out-degree, so again $v$ exists.
    If $v$ exists and $k_v = 0$, 
    then Lemma~\ref{Lemma: no short-cycle S-struct}
    implies $T(v) = \emptyset$ and Lemma~\ref{Lemma: T-set} implies
    \begin{align*}
        |S(v)\setminus T(v) |+1 &= |S(v)|+1 \\
        & \leq 1+|N^+(S(v))\setminus N^+(v)| \\
        & \leq |N^+(v) \cup N^+(S(v))| \\
        &< |S(v)|+1
    \end{align*}
    a contradiction. So we conclude that $k_ v > 0$. 
    Observe that if $\deg^+(v) = k_v$, then $S(v) = \emptyset$, a contradiction.
    So $1 \leq k_v \leq \deg^+(v)-1$ forcing $\deg^+(v)\geq 2$.

    Let $A(v) = \{w\in N^+(w): \deg^+(w)\geq 1\}$.
    Then $1\leq |A(v)| = \deg^+(v)-k_v$, and $2\leq \deg^+(v) \leq d$.
    Since $d\geq 2$, $\frac{d^2+4}{4d} \geq 1$,
    so Lemma~\ref{Lemma: T-set} implies
    \[
    \frac{d^2+4}{4d}\Big(\deg^+(v)+|S(v)\setminus T(v)| \Big) \leq \frac{d^2+4}{4d}|N^+(v) \cup N^+(S(v))| < |S(v)|+1
    \]
    from which we conclude that $\frac{d^2+4}{4d}\deg^+(v) < |T(v)|+1$ since $T(v) \subseteq S(v)$.
    Considering the expected size of $|T(v)\cap N^+(w)|$ for $w \in A(v)$,
    noting that for $d\geq 2$ the function $\frac{d^2+4}{4d}$ is monotone increasing,
    we see that
    \begin{align*}
        \mathbb{E}(|T(v)\cap N^+(w)|) &= \frac{|T(v)|}{|A(v)|} \\
        & > \frac{1}{\deg^+(v)-k_v}\Bigg(\frac{d^2+4}{4d}\deg^+(v) - 1\Bigg) \\
        & \geq \frac{1}{\deg^+(v)-k_v}\Bigg(\Big(\frac{\deg^+(v)^2}{4}+1 \Big)\frac{\deg^+(v)}{\deg^+(v)}  - 1\Bigg) \\
        & \geq \frac{\deg^+(v)^2}{4(\deg^+(v)-k_v)}
    \end{align*}
    Now consider the real function $f(x,y) = \frac{y^2}{4(y-x)}$.
    Notice that 
    \[
    4x(x-y) + y^2 = 4x^2-4xy+y^2 = (2x-y)^2\geq 0
    \]
    for any reals $x$ and $y$. Thus, for any reals $x,y$
    we have $y^2 \geq 4x(y-x)$ implying in turn that
    $f(x,y) = \frac{y^2}{4(y-x)} \geq x$ for all reals $x,y$ on the functions domain.
    Since we have already shown $\deg^+(v)\neq k_v$, we conclude that
    \[
     \mathbb{E}(|T(v)\cap N^+(w)|) > \frac{\deg^+(v)^2}{4(\deg^+(v)-k_v)} \geq k_v
    \]
    so there exists a vertex $w \in N^+(v)$ with $|T(v)\cap N^+(w)| > k_v$.
    
    By Lemma~\ref{Lemma: no short-cycle S-struct} and Lemma~\ref{Lemma: no short-cycle S-neighbourhoods}
    each vertex in $T(v)\cap N^+(w)$ has in-degree $1$ and out-degree $0$.
    Thus, there exists a vertex $w \in N^+(v)$ with $k_w > k_v = \max_{u\in V(D)} k_u$.
    Since this is a contradiction, we conclude that $D$ is not a counterexample.
    This completes the proof that $D$ contains a quasi-kernel of order at most $\frac{(d^2+4)n}{(d+2)^2}$.
    It remains to be shown that 
    such a quasi-kernel can be found in $O(n^2)$ time.

    To begin we store $D$ as a dictionary whose keys are vertices, which point to a vertex's out-neighbourhood and in-neighbourhood, 
    stored as a pair of sets.
    This allows us to look up a fixed vertex $v$'s out-neighbourhood $N^+(v)$ and in-neighbourhood $N^-(v)$ in constant time,
    assuming the vertex $v$ is already known.
    
    The algorithm to generate a small quasi-kernel in digraphs with no 
    $\Vec{C}_3, \Vec{C}^{-}_4,C_4^{\Uparrow},$ or $C_6^{\Uparrow}$ as a subgraph
    is similar to the $O(n^4)$ time algorithm
    presented during the proof of Theorem~\ref{Thm: Main CounterStructure}.
    That is, we identify a vertex $v$ where 
    \[
    \frac{d^2+4}{4d}|N^+(v) \cup N^+(S(v))| \geq |S(v)|+1
    \]
    then we feed this vertex $v$ into the input of Algorithm~\ref{alg:main}, 
    recursively applying the same process until the result digraph is empty, then reconstructing
    a quasi-kernel in $D$ by combining the outputs at each step.
    
    The time advantage for digraphs with no $\Vec{C}_3, \Vec{C}^{-}_4,C_4^{\Uparrow},$ or $C_6^{\Uparrow}$ as a subgraph
    is the means by which we search for the next vertex $v$
    with 
    \[
    \frac{d^2+4}{4d}|N^+(v) \cup N^+(S(v))| \geq |S(v)|+1.
    \]
    Rather than checking each vertex of the digraph in order until a vertex $v$
    is identified, we begin by choosing an arbitrary vertex $u$ as the active vertex.
    After this first vertex $u$ is identified, 
    we check if $u$ works as the vertex $v$ we require.
    This check only takes 
    which only takes constant time to check since
    we must only examine vertices $x$ such that $\dist(u,x)\leq 3$,
    and
    the size of $u$'s closed third out-neighbourhood is at most $d^3+d^2+d+1$, where $d$ is constant.
    Moreover, for a fixed $w$ in the second out-neighbourhood of $u$, the time it takes to check if $N^-(w)\subseteq N^+(u)$
    is only depends on $d$, because looking up $N^+(u)$ and $N^-(w)$ is constant time, 
    $|N^+(u)|\leq d$, while we can halt the computation early should $|N^-(w)|>d$.
    So we suppose the active vertex $u$ cannot be taken as the desired vertex $v$.
    If the active vertex $u$ has $k_u = 0$, then 
    Lemma~\ref{Lemma: no short-cycle S-struct} and Lemma~\ref{Lemma: T-set}
    implies we can take $v = u$, so $k_u\geq 1$.
    Since, $k_u \geq 1$ and we cannot take $u$ as our desired vertex $v$,
    the above averaging argument implies $u$ has an out-neighbour $w$ with $k_w>k_u$.
    Take $w$ as the active vertex and check if $w$ can work as $v$, if not then 
    the averaging argument works again to find a vertex $z$ with larger $k_z$. 
    Make $z$ the active vertex and repeat until the active vertex satisfies the requirements of $v$.
    Notice that each step takes constantly many steps, because $d$ is constant, 
    while this loop continues for at most $d$ rounds, since the $k$ parameter is increasing but at most $d$.
    Thus, finding a vertex $v$ as desired only requires constantly many steps.

    From here running Algorithm~\ref{alg:main} until $H$ is defined takes $O(n)$ time,
    and $H$ has less vertices than $D$, so Algorithm~\ref{alg:main} and the search for a new vertices $v$ must be carried out at most $O(n)$ times.
    Finally, reconstructing the quasi-kernel in $D$ from the intermediate outputs only requires $O(n^2)$ steps.
    It follows the algorithm will complete in $O(n^2)$ time.
    This completes the proof.
\end{proof}

\section{Future Work}

We conclude with a discussion of future work.
Given the nature of the paper, we will focus on questions and conjectures regarding how the methods introduced here
might be used in the future.
To begin, we conjecture that the proof of Theorem~\ref{Thm: Out-degree 3}
can be strengthens as follows.
If this conjecture is false, it would be interesting to understand
the structure of the counterexample.

\begin{conjecture}
    If $D$ is a sourceless oriented graph with maximum out-degree at most $3$, then 
    there exists a vertex $v \in V(D)$ such that
    \[
    |N^+(v)\cup N^+(S(v))|\geq |S(v)|+1.
    \]
\end{conjecture}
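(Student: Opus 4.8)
The plan is to argue by contradiction along the architecture of the proof of Theorem~\ref{Thm: Out-degree 3}, but with the threshold $t=1$ in place of $t=\tfrac43$. So suppose $D$ is a sourceless oriented graph with $\Delta^+(D)\le 3$ in which every vertex $v$ satisfies $|N^+(v)\cup N^+(S(v))|\le |S(v)|$, and aim for a contradiction. Two preliminary observations frame everything: since every vertex of $S(v)$ has all of its in-neighbours inside $N^+(v)$ and $D$ is sourceless with $\Delta^+(D)\le 3$, we have $|S(v)|\le 3\deg^+(v)\le 9$ for each $v$; and since $N^+(v)\subseteq N^+(v)\cup N^+(S(v))$, the standing hypothesis forces $|S(v)|\ge\deg^+(v)$, while any vertex with $\deg^+(v)\ge 1$ and $S(v)=\emptyset$ already satisfies the desired inequality, so we may further assume $|S(v)|\ge 1$ whenever $\deg^+(v)\ge 1$. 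The payoff justifies the effort: together with Theorem~\ref{Thm: Main CounterStructure} applied with $t=1$, the conjecture would upgrade Theorem~\ref{Thm: Out-degree 3} to the Small Quasi-Kernel Conjecture for every digraph of maximum out-degree at most $3$.

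First I would record that the instances forbidding short cycles are already settled by the tools of Section~4. Since $|S(v)|\le 9<12$ for all $v$, the hypothesis $|N^+(v)\cup N^+(S(v))|\le |S(v)|$ yields $\tfrac{13}{12}|N^+(v)\cup N^+(S(v))|\le\tfrac{13}{12}|S(v)|<|S(v)|+1$ for every $v$; hence, were $D$ also to contain no $\Vec{C}_3,\Vec{C}^{-}_4,C_4^{\Uparrow},$ or $C_6^{\Uparrow}$ as a subgraph, the case analysis inside the proof of Theorem~\ref{Thm: C_3,C_4,C_6} with $d=3$ would already produce a contradiction. So any counterexample to the conjecture must contain one of those four oriented short cycles as a subgraph, which both confines the search and indicates that the general proof should be organised around these configurations.

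For the remaining case the plan is to re-run the minimum-out-degree bootstrap of Lemmas~\ref{Lemma: out-3, min-0}--\ref{Lemma: out-3, min-2} with the sharper inequality --- picking an in-neighbour $u$ of a vertex $x$ of small out-degree, splitting on $\deg^+(u)\in\{1,2,3\}$ and on which of the sets $N^+(w)$, for $w\in N^+(u)\setminus\{x\}$, are contained in $S(u)$, and in each leaf comparing $|S(u)|$ and $|N^+(S(u))\setminus N^+(u)|$ against $|N^+(u)\cup N^+(S(u))|$ --- in order to force $D$ to be $3$-out-regular, hence sink-free. One then treats the $3$-out-regular case by choosing $v$ with $|S(v)|$ minimum, writing $N^+(v)=\{x,w,z\}$ with $|N^+(w)\cap S(v)|\ge|N^+(z)\cap S(v)|\ge|N^+(x)\cap S(v)|$, and chasing the dichotomy ``$N^+(w)\subseteq S(v)$'' versus ``$|N^+(w)\cap S(v)|\le 2$'' down to a numerical contradiction in the counts $|S(v)|$, $|S(w)|$ and $|N^+(S(v))\setminus N^+(v)|$, exactly as in the proof of Theorem~\ref{Thm: Out-degree 3}.

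The main obstacle is the disappearance of slack. The $t=\tfrac43$ arguments repeatedly win by a comfortable integer margin (for instance by concluding from $|S(u)|+1\le 7<\tfrac43\cdot 7$), whereas at $t=1$ many of those margins collapse: in the $3$-out-regular step the working bound only gives $|N^+(S(v))\setminus N^+(v)|\le|S(v)|-3\le 6$ in place of $\le 4$, so the contradiction in the proof of Theorem~\ref{Thm: Out-degree 3} no longer simply falls out. I therefore expect the proof cannot be a mechanical re-run and will need genuinely new local structure --- most plausibly an infinite-descent argument in the spirit of the averaging over $k_v=|\{u\in N^+(v):\deg^-(u)=1,\ \deg^+(u)=0\}|$ used in the proof of Theorem~\ref{Thm: C_3,C_4,C_6}, showing that a vertex at which the inequality fails forces, two steps out, a vertex with strictly larger such parameter or strictly smaller $|S(\cdot)|$. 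It also remains conceivable that the conjecture is false, in which case the reductions above pin a counterexample down to a $3$-out-regular oriented graph, necessarily containing one of $\Vec{C}_3,\Vec{C}^{-}_4,C_4^{\Uparrow},C_6^{\Uparrow}$, in which every vertex $v$ satisfies $|S(v)|\ge 3$ and $|N^+(S(v))\setminus N^+(v)|\le |S(v)|-3$.
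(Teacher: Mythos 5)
This statement is not proved in the paper at all: it appears in the Future Work section precisely as an open conjecture (the author explicitly entertains the possibility that it is false), so there is no proof of record to compare your submission against. More importantly, your submission is not a proof either, and you say so yourself. What you give is a reduction plus a plan: the correct easy observations that $|S(v)|\le 3\deg^+(v)\le 9$, that the negation of the conjecture forces $\deg^+(v)\le|S(v)|$ for every $v$, and that a counterexample must contain one of $\Vec{C}_3,\Vec{C}^{-}_4,C_4^{\Uparrow},C_6^{\Uparrow}$ (since with $d=3$ one has $\tfrac{13}{12}|S(v)|<|S(v)|+1$ whenever $|S(v)|\le 9$, so the structural argument inside the proof of Theorem~\ref{Thm: C_3,C_4,C_6} already supplies the desired vertex in the cycle-free case). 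These steps are sound, but they only trim the problem; they do not touch its core.

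The genuine gap is exactly the one you flag: the entire engine of the paper's Theorem~\ref{Thm: Out-degree 3} argument runs on the slack afforded by the factor $\tfrac43$, and at threshold $1$ that slack vanishes at the decisive points. For instance, in the bootstrap of Lemma~\ref{Lemma: out-3, min-0} the case $\deg^+(u)=2$, $|S(u)|=3$ is killed there by $|S(u)|+1\le 4\le\tfrac43\cdot 3$, which becomes $4\le 3$ at $t=1$ and gives nothing; similarly, in the $3$-out-regular endgame the bound $|N^+(S(v))\setminus N^+(v)|\le 4$ degrades to $\le|S(v)|-3\le 6$, so the final counting contradiction never materializes. Your proposed remedies (an infinite-descent on the parameter $k_v$, or a descent on $|S(\cdot)|$) are only named, not carried out, and you concede the conjecture might simply fail, in which case no amount of this case analysis could succeed. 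So the submission should be read as a partial reduction and a research plan for an open problem, not as a proof; to close it you would need the new local structural input for $3$-out-regular oriented graphs containing one of the four forbidden cycle orientations, which neither you nor the paper currently has.
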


In the same vein it is natural to ask
how Theorem~\ref{Thm: Out-degree 3} can be generalized to digraphs with out-degree at most $d$.
We know that this kind of argument cannot go all the way to proving the Conjecture~\ref{Conj: small quasi}
due to Proposition~\ref{Prop: Alg not perfect}.
What is the limit to this type of argument as $d$ grows?
In particular, what is the smallest $d$ such that these methods cannot prove Conjecture~\ref{Conj: small quasi}?

\begin{problem}
    Let $d\geq 2$ be fixed but arbitrary.
    What is the least constant $f(d)$ such that
    for all  sourceless oriented graphs $D$ with maximum out-degree at most $d$, there 
    exists a vertex $v \in V(D)$ such that
    \[
    f(d)|N^+(v)\cup N^+(S(v))|\geq |S(v)|+1.
    \]
    What is the least $d$ such that $f(d)>1$?
\end{problem}

Given the work here exposes some local structure a smallest counterexample 
to Conjecture~\ref{Conj: small quasi} must satisfy,
it is natural to ask what other structure does this force upon a smallest counterexample?
What structures does Corollary~\ref{Coro: Smallest} preclude?
Perhaps one might prove Conjecture~\ref{Conj: small quasi}, 
at least for a well structured hereditary classes,
by introducing another approach for handling smallest counterexamples,
then showing the resulting structure is not compatible with Corollary~\ref{Coro: Smallest}.

\begin{problem}
    Suppose $D$ is a sourceless oriented graph such that 
    for all $v \in V(D)$ 
    \[
    |N^+(v)\cup N^+(S(v))|< |S(v)|+1.
    \]
    What structure, if any, must $D$ have?
\end{problem}

The final open problem we propose
it to quantify the worst case behaviour of Algorithm~\ref{alg:main}.
Not in terms of time complexity, but in terms of the size of a quasi-kernel outputted by the algorithm.
We have seen in Proposition~\ref{Prop: Alg not perfect}
that there exists digraphs which we cannot theoretically guarantee
Algorithm~\ref{alg:main} will output a small quasi-kernel for.
However, if one examines the tournament construction we provide to prove 
Proposition~\ref{Prop: Alg not perfect} it is clear that 
Algorithm~\ref{alg:main} still outputs a reasonably small quasi-kernel.
We believe this will not happen in general.

\begin{conjecture}
    For all $\epsilon>0$, there exists a sourceless digraph $D$
    such that for all vertices $v$ in $D$, any quasi-kernel returned by Algorithm~\ref{alg:main}
    for inputs $D$ and $v$ has order at least $(1-\epsilon)n$.
\end{conjecture}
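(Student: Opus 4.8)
The plan is to exhibit, for each $\epsilon>0$, an explicit ``hub–and–leaf'' digraph on which Algorithm~\ref{alg:main} is forced to place almost every vertex into $Q$ no matter which vertex $v$ is fed in. By Lemma~\ref{Lemma: Alg works} the output is always a quasi-kernel, so it suffices to bound its size, and here the key point is that $Q$ always contains $S(v)$ (the set $K$ starts equal to $S(v)$ and never shrinks). Thus I only need to design $D$ so that $|S(v)|$ is within $o(n)$ of $n$ for \emph{every} $v$. The construction: fix a large integer $m=m(\epsilon)$, set $k=\lceil\sqrt m\rceil$ and $n=m+k$; take hubs $h_1,\dots,h_k$ with both arcs $(h_i,h_j),(h_j,h_i)$ present for all $i\ne j$, and $m$ leaves split as evenly as possible into $L_1,\dots,L_k$; add an arc from every leaf to every hub, and an arc from $h_j$ to every leaf of $L_j$. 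Every vertex then has in-degree at least one, so $D$ is sourceless.

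Next I would run Algorithm~\ref{alg:main} in the only two cases. If $v=h_j$, then $N^+[h_j]\supseteq\{h_1,\dots,h_k\}$, so deleting $N^+[h_j]$ kills every hub and isolates every leaf outside $L_j$; hence $S(h_j)=L\setminus L_j$, and a quick check gives $N^+[h_j]\cup N^+[S(h_j)]=V(D)$, so the working digraph $H$ is empty, the while-loop is skipped, $R=\emptyset$, and $h_j$ is not re-added because $N^+(L\setminus L_j)=\{h_1,\dots,h_k\}\ni h_j$. Therefore $Q=L\setminus L_j$ and $|Q|=m-|L_j|\ge m-\lceil m/k\rceil$. If $v=\ell$ is a leaf, then $N^+[\ell]=\{\ell\}\cup\{h_1,\dots,h_k\}$, whose deletion again kills all hubs and isolates every other leaf, so $S(\ell)=L\setminus\{\ell\}$; again $H$ becomes empty and $R=\emptyset$, but now $\ell\notin N^+(L\setminus\{\ell\})=\{h_1,\dots,h_k\}$, so $\ell$ is added and $Q=L$, $|Q|=m$.

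Combining the two cases, the minimum of $|Q|$ over all inputs is $m-\lceil m/k\rceil$, and with $k=\lceil\sqrt m\rceil$ one has $\lceil m/k\rceil\le\sqrt m+1$ and $k\le\sqrt m+1$, so $|Q|\ge m-\sqrt m-1\ge(1-\epsilon)(m+k)=(1-\epsilon)n$ as soon as $\epsilon m\ge 2\sqrt m+2$, i.e.\ once $m$ is large enough in terms of $\epsilon$. Fixing such an $m$ produces the desired digraph. Note that $H$ is empty in every branch, so Algorithm~\ref{alg:main} makes no genuine choices on this $D$; ``any quasi-kernel returned'' is therefore the unique output, and the bound is unconditional.

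The obstacle I expect is conceptual rather than computational: one has to see that beating $(1-\epsilon)n$ for \emph{all} $v$ simultaneously forces $S(v)$ to span almost all of $D$ for every $v$, which in turn dictates (i) the hubs must pairwise dominate one another — hence the bidirected clique, so that deleting $N^+[h_j]$ removes \emph{all} hubs — and (ii) the leaves must point back to all hubs, so that deleting $N^+[\ell]$ also removes all hubs. Once that shape is fixed, the only thing being optimised is the trade-off between the $k$ hub vertices and the $\lceil m/k\rceil$ leaves of a single block, both of which lie outside $Q$ on the relevant input; balancing them gives $k\asymp\sqrt m$ and a loss of only $\Theta(\sqrt n)=o(n)$.
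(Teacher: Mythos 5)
This statement is left as an open conjecture in the paper -- no proof is given -- so there is nothing to compare against; judged on its own, your construction appears to be correct and would settle the conjecture affirmatively. The key observation is sound: Algorithm~\ref{alg:main} initializes $K=S(v)$ and never removes vertices from $K$, so the output always contains $S(v)$; your hub-and-leaf digraph is engineered so that $S(v)$ is all but $O(\sqrt{n})$ of the vertices for every choice of $v$. I traced both cases and agree: for $v=h_j$ one gets $S(h_j)=L\setminus L_j$, $N^+[h_j]\cup N^+[S(h_j)]=V(D)$, hence $H=\emptyset$, $R=\emptyset$, $h_j\in N^+(K)$, and $Q=L\setminus L_j$; for a leaf $\ell$ one gets $Q=L$. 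Since $H$ is empty in every run, the algorithm's nondeterminism (choice of source $x$, choice of $R$) never arises, so the quantifier ``any quasi-kernel returned'' is genuinely handled, and the balance $k=\lceil\sqrt m\rceil$ gives $|Q|\geq n-O(\sqrt n)\geq(1-\epsilon)n$ for $m$ large, exactly as you compute. Two remarks worth recording: first, your hubs use digons, which the paper's conventions explicitly permit and the conjecture does not exclude (it asks for a digraph, not an oriented graph), but it would be natural to ask whether an oriented example exists, since deleting $N^+[h_j]$ then cannot wipe out all hubs so easily; second, your example is in fact stronger than the conjecture demands, since any single hub is a quasi-kernel of size $1$, so the algorithm is not merely forced above $(1-\epsilon)n$ but is forced to miss an optimal solution by a factor of order $n$ -- a sharper statement than the one suggested by Proposition~\ref{Prop: Alg not perfect}, whose tournament-plus-sinks construction only pushes the algorithm's output to roughly $n/2$.
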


\section*{Acknowledgement}

I would like to Paul Seymour for introducing me to this problem and for encouraging my initial efforts to work on it
during the
Banff International Research Station workshop on New Perspectives in Colouring and Structure in October 2024.  

\bibliographystyle{abbrv}
\bibliography{bib}

\end{document}